\newcommand{\R}{\mathbb{R}}
\newcommand{\N}{\mathbb{N}}
\newcommand{\eps}{\varepsilon}
\renewcommand{\epsilon}{\varepsilon}
\newcommand{\e}{\varepsilon}
\renewcommand{\rho}{\varrho}
\renewcommand{\phi}{\varphi}
\renewcommand{\i}{\mathtt{i}}
\renewcommand{\j}{\mathtt{j}}
\theoremstyle{plain}
\newtheorem{thm}{Theorem}
\newtheorem{theorem}[thm]{Theorem}
\newtheorem{lemma}{Lemma}
\newtheorem*{claim*}{Claim}
\theoremstyle{definition}
\newtheorem{definition}{Definition}
\newtheorem*{examples*}{Examples}
\newtheorem*{example*}{Example}
\newtheorem*{notations*}{Notations}
\newtheorem*{notation*}{Notation}
\numberwithin{equation}{section}
\numberwithin{thm}{section}
\numberwithin{lemma}{section}
\numberwithin{proposition}{section}
\numberwithin{cor}{section}
\numberwithin{claim}{section}
\numberwithin{definition}{section}
\numberwithin{conjecture}{section}
\numberwithin{example}{section}
\numberwithin{remark}{section}
\numberwithin{notations}{section}
\numberwithin{notation}{section}
\author{Pablo Shmerkin}
\address{University of Surrey, United Kingdom and Torcuato Di Tella University, Buenos Aires, Argentina}
\email{pshmerkin@utdt.edu}
\subjclass[2010]{Primary 37C45, 37D35, 37H15}
\keywords{topological pressure, self-affine sets, affinity dimension, subadditive thermodynamic formalism}
\thanks{The author was partially supported by a Leverhulme Early Career Fellowship}
\title{Self-affine sets and the continuity of subadditive pressure}
\begin{document}

\begin{abstract}
The affinity dimension is a number associated to an iterated function system of affine maps, which is fundamental in the study of the fractal dimensions of self-affine sets. De-Jun Feng and the author recently solved a folklore open problem, by proving that the affinity dimension is a continuous function of the defining maps. The proof also yields the continuity of a topological pressure arising in the study of random matrix products. I survey the definition, motivation and main properties of the affinity dimension and the associated SVF topological pressure, and give a proof of their continuity in the special case of ambient dimension two.
\end{abstract}

\maketitle

\section{Introduction}

\label{sec:intro}

Let $\mathcal{F}=(f_1,\ldots, f_m)$ be a collection of contractive affine maps on some Euclidean space $\R^d$. That is, $f_i(x)=A_i x+t_i$, where $A_i\in\R^{d\times d}$ are linear maps, $t_i\in\R^d$ are translations, and $\|A_i\|<1$, where $\|\cdot\|$ denotes Euclidean operator norm (although any other operator norm would work equally well). It is well known that there exists a unique nonempty compact set $E=E(\mathcal{F})$ such that
\begin{equation} \label{eq:equality-self-affine}
E = \bigcup_{i=1}^m f_i(E) = \bigcup_{i=1}^m A_i E+t_i.
\end{equation}
Such sets are called \emph{self-affine}. The tuple $\mathcal{F}$ is termed as an \emph{iterated function system}, and $E$ is the \emph{attractor} or \emph{invariant set} of $\mathcal{F}$.

An important special case is that in which the maps $f_i$ are all similarities; in this case $E$ is known as a \emph{self-similar set}. It is known that for self-similar sets, Hausdorff, lower and upper box counting dimensions all agree. Moreover, if $s$ is the only real solution to $\sum_{i=1}^m r_i^s=1$, where $r_i$ is the similarity ratio of $f_i$, then $s$ is an upper bound for the Hausdorff dimension of $E$, and equals the Hausdorff dimension of $E$ under a number of ``controlled overlapping'' conditions, the strongest and simplest being the strong separation condition, which requires that the basic pieces $f_i(E)$ are mutually disjoint. The number $s$ is called the \emph{similarity dimension} of the system $\mathcal{F}$, and is clearly continuous, and indeed real-analytic, as a function of the maps $f_i$ (identified with the Euclidean space of the appropriate dimension).

The situation is dramatically more complex for general self-affine sets. It is well-known that the Hausdorff and box counting dimensions of self-affine sets may differ, and that each of them is a discontinuous function of the defining maps, even under the strong separation condition. Strikingly, it is not even known whether lower and upper box dimensions always coincide for self-affine sets. No general formula for either the Hausdorff or box counting dimension is known or expected to exist, again even in the strongly separated case. However, although the dimension theory of self-affine sets may appear at first sight like a bleak subject, many interesting and deep results have been obtained. Among these, K. Falconer's seminal paper \cite{Falconer92} has been highly influential. There, Falconer introduces a number $s=s(\mathcal{F})$ associated to an affine IFS $\mathcal{F}$, which we will term \emph{affinity dimension} (no standard terminology exists; the term singularity dimension is also often used). As a matter of fact, $s$ depends only on $A=(A_1,\ldots,A_m)$, i.e. the linear parts of the affine maps $f_i$, and is independent of the translations.

Falconer proved that the affinity dimension is always an upper bound for the upper box-counting, and therefore the Hausdorff, dimension of $E$, and in some sense, ``typically'' equals the Hausdorff dimension of $E$; his result is described in more detail in Section \ref{subsec:affinity-dim} below. The definition of the affinity dimension is rather more involved than the definition of similarity dimension, which it extends, and is postponed to Section \ref{subsec:affinity-dim}.

The question of whether the affinity dimension is continuous as a function of the generating maps has been a folklore open question in the fractal geometry community for well over a decade (I learned it from B. Solomyak around 2000), and was raised explicitly in \cite{FalconerSloan09}. Recently, together with De-Jun Feng \cite{FengShmerkin13} we proved that the answer is affirmative:

\begin{theorem} \label{thm:affinity-dim-continuous}
The affinity dimension $s$ is a continuous function of the linear maps $(A_1,\ldots, A_m)$.
\end{theorem}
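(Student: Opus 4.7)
The plan is to recast $s = s(\mathcal{A})$ as the unique zero in the variable $s$ of the subadditive pressure
\begin{equation*}
P(\mathcal{A}, s) := \lim_{n\to\infty} \frac{1}{n} \log \sum_{\ii \in \{1,\ldots,m\}^n} \phi^s(A_\ii),
\end{equation*}
where $A_\ii = A_{i_1}\cdots A_{i_n}$. Because $\phi^s$ is submultiplicative on matrix products, $n \mapsto \log \sum_\ii \phi^s(A_\ii)$ is subadditive, so the limit exists and equals the infimum. Hence $P(\cdot, s)$ is an infimum of continuous functions of $\mathcal{A}$ and is upper semicontinuous. Combined with the strict monotonicity of $s \mapsto P(\mathcal{A}, s)$, this yields upper semicontinuity of $\mathcal{A} \mapsto s(\mathcal{A})$ for free: any $s' > s(\mathcal{A})$ is already witnessed by some finite $n$ with $\tfrac{1}{n} \log \sum_\ii \phi^{s'}(A_\ii) < 0$, a condition stable under perturbation. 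The task therefore reduces to proving lower semicontinuity of $\mathcal{A} \mapsto P(\mathcal{A}, s)$; this is the infimum-versus-supremum direction and is the genuine content of the theorem.

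Specialising to $d = 2$, the singular value function takes the explicit form $\phi^s(A) = \|A\|^s$ on $[0,1]$, $\phi^s(A) = \|A\|^{2-s} |\det A|^{s-1}$ on $[1,2]$, and $\phi^s(A) = |\det A|^{s/2}$ on $[2,\infty)$. Since $|\det \cdot|$ is exactly multiplicative, continuity is immediate for $s \geq 2$; and in the range $s \in [1,2]$, factoring the determinant contribution as Bernoulli weights $w_i := |\det A_i|^{s-1}$ (continuous in $\mathcal{A}$) reduces the problem to lower semicontinuity of a weighted norm pressure
\begin{equation*}
\widetilde P(\mathcal{A}, t; \mathbf{w}) := \lim_{n\to\infty} \frac{1}{n} \log \sum_\ii \|A_\ii\|^t \prod_{k=1}^n w_{i_k},
\end{equation*}
for $t \in (0,1]$; the range $s \in [0,1]$ is the same statement with $\mathbf{w} \equiv \mathbf{1}$. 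So in dimension two everything comes down to the continuity of this norm-based pressure.

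For $\widetilde P$, I would apply the subadditive variational principle of Cao--Feng--Huang to write
\begin{equation*}
\widetilde P(\mathcal{A}, t; \mathbf{w}) = \sup_\mu \Bigl\{ h_\mu(\sigma) + t\,\chi_1(\mathcal{A}, \mu) + \sum_i \mu([i]) \log w_i \Bigr\},
\end{equation*}
where $\mu$ ranges over shift-invariant measures on $\{1,\ldots,m\}^\N$ and $\chi_1$ is the top Lyapunov exponent. The entropy and weight terms are jointly well-behaved, so the entire issue sits on $\chi_1(\mathcal{A}, \mu)$, itself an infimum in $\mathcal{A}$ and hence a priori only upper semicontinuous. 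I would then split by the common invariant subspace structure of $\mathcal{A}$: if $\mathcal{A}$ is strongly irreducible and proximal, Furstenberg--Guivarc'h--Le Page theory provides a unique stationary measure $\nu$ on $\mathbb{P}^1$ depending continuously on $\mathcal{A}$, turning $\chi_1$ into a continuous integral and giving continuity of $\widetilde P$ near $\mathcal{A}$; if $\mathcal{A}$ shares a common invariant line, a basis change puts all $A_i$ in upper-triangular form, so that $\|A_\ii\|$ is controlled by the two scalar diagonal products, and the pressure admits a closed-form expression manifestly continuous in the matrix entries.

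The main obstacle is bridging these two regimes: at a reducible $\mathcal{A}_0$ approached by strongly irreducible $\mathcal{A}_n \to \mathcal{A}_0$, the stationary measures $\nu_n$ on $\mathbb{P}^1$ need not converge to anything compatible with the invariant line of $\mathcal{A}_0$, and upper semicontinuity alone does not preclude a strict drop of $\widetilde P(\mathcal{A}_n, t;\mathbf{w})$ in the limit. To rule this out, my plan is to use the explicit triangular model at $\mathcal{A}_0$ to build, for each near-maximising shift-invariant $\mu$, a test measure whose Lyapunov contribution $t\,\chi_1(\mathcal{A}_n, \mu)$ stays within $o(1)$ of the value at $\mathcal{A}_0$; a convenient choice is the Kaenmaki equilibrium state for $\phi^s$ at $\mathcal{A}_0$, which is ergodic and whose optimal growth direction in $\mathbb{R}^2$ is stable under small perturbations of $\mathcal{A}$. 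This one-step two-dimensional cocycle analysis, in spirit close to dominated-splitting criteria, is the technical heart of the proof; outside $d=2$ the analogous bookkeeping becomes substantially more involved, which is presumably why only this case is treated here.
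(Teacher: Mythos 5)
Your setup matches the paper's exactly: write the affinity dimension as the zero of the subadditive pressure $P(A,s)$, note that $P(\cdot,s)=\inf_n \tfrac1n\log\sum_\ii\phi^s(A_\ii)$ is upper semicontinuous, and reduce the theorem to lower semicontinuity of $P(\cdot,s)$. Your reduction via the explicit form of $\phi^s$ in $d=2$ (absorbing the determinant as Bernoulli weights so that only a weighted norm pressure $\widetilde P$ remains) is also sound, and the appeal to the Cao--Feng--Huang variational principle is the same move the paper makes.

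Where you diverge, and where a genuine gap appears, is in how you obtain lower semicontinuity after invoking the variational principle. You propose to split on the algebraic structure of $\mathcal{A}$ itself (strongly irreducible and proximal vs.\ common invariant line) and then ``bridge'' the two regimes. But the bridging step, as written, asserts precisely what needs proving: you claim that for a near-maximizing $\mu$ at $\mathcal{A}_0$ the Lyapunov term $t\,\chi_1(\mathcal{A},\mu)$ stays within $o(1)$ of its value at $\mathcal{A}_0$ when $\mathcal{A}$ is perturbed. For a fixed shift-invariant $\mu$, however, $\chi_1(\cdot,\mu)$ is again an infimum of finite-$n$ averages and is therefore only \emph{upper} semicontinuous in $\mathcal{A}$ --- exactly the wrong direction. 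The phrase ``whose optimal growth direction in $\mathbb{R}^2$ is stable under small perturbations of $\mathcal{A}$'' is the crux, not a justification: at a reducible $\mathcal{A}_0$ approached by irreducible data, the invariant line vanishes and there is no a priori reason the growth direction persists. (You also omit the case where $\mathcal{A}$ is irreducible but not proximal/strongly irreducible, e.g.\ permuting a pair of lines or lying in a compact group, which needs separate treatment.) What you sketch is in spirit the alternative $d=2$ proof via Bocker--Viana continuity of Lyapunov exponents that the paper mentions in the introduction, but you have not supplied the mechanism that makes that proof work.

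The paper avoids this obstruction by splitting not on the structure of $\mathcal{A}$ but on the Lyapunov spectrum of the near-optimal measure $\mu$ delivered by the variational principle. If the exponents are equal, Egorov's theorem gives a large set of words on which $|A(\i,n)v|\ge\lambda^n e^{-\e n}|v|$ uniformly in $v$, an inequality that is obviously stable under perturbation. If the exponents are distinct, Oseledets' theorem, Egorov's theorem, and Khintchine's quantitative recurrence produce a set $Y_n$ of words whose products satisfy a strict cone condition; Lemma~\ref{lem:cone-condition} then turns submultiplicativity into two-sided supermultiplicativity, which is exactly the robustness under both perturbation of $A$ and concatenation of blocks that your scheme lacks. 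If you want to salvage your route you must supply a concrete replacement for the cone-condition lemma --- for instance carrying out the Furstenberg/Le~Page stationary-measure continuity argument or the Bocker--Viana estimates in full --- rather than asserting stability of the growth direction.
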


A related but in some sense simpler result concerns the norms of matrix products. Again let $A=(A_1,\ldots, A_m)$ be a finite collection of invertible linear maps on $\R^d$. Given $s\ge 0$, define
\begin{equation} \label{eq:def-matrix-pressure}
M(A,s) = \lim_{n\to\infty}\frac1n \log\left( \sum_{(i_1\cdots i_k)} \| A_{i_k}\cdots A_{i_1}\|^s \right),
\end{equation}
where $\|\cdot\|$ is the standard Euclidean norm (the limit is easily seen to exist from subadditivity). The reader familiar with the thermodynamic formalism may note that this definition resembles the definition of topological pressure of a continuous potential on the full shift on $m$ symbols, except that here we consider norms of matrix products instead of Birkhoff sums; this is an instance of the topological pressure in the setting of the \emph{subadditive} thermodynamic formalism. This will be discussed in Section \ref{subsec:therm-form}.

The quantity $M(A,s)$ is rather natural. On one hand, in the thermodynamic setting it is closely linked to the Lyapunov exponent of an IID random matrix product (with respect to ergodic measures under the shift). On the other hand, the ``zero temperature limit'' as $s\to \infty$ is the joint spectral radius of the matrices $(A_1,\ldots,A_m)$, which is an important quantity in a wide variety of fields. Although the joint spectral radius is well-known to be continuous, it is far from clear from the definition whether $M(A,s)$ is always continuous. Together with D.J. Feng \cite{FengShmerkin13}, we have proved that it is:

\begin{theorem} \label{thm:matrix-pressure-continuity}
$M(A,s)$  is jointly continuous in $(A,s)$.
\end{theorem}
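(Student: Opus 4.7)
I would establish upper and lower semicontinuity of $M$ at a point $(A,s)\in(\mathrm{GL}_d(\R))^m\times[0,\infty)$ separately.

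Upper semicontinuity is formal. Submultiplicativity of the operator norm gives
$$\sum_{|\ii|=n+m}\|A_\ii\|^{s}\le\Bigl(\sum_{|\ii|=n}\|A_\ii\|^{s}\Bigr)\Bigl(\sum_{|\jj|=m}\|A_\jj\|^{s}\Bigr)$$
for every $s\ge 0$, so $a_n(A,s):=\log\sum_{|\ii|=n}\|A_\ii\|^s$ is subadditive in $n$ and $M(A,s)=\inf_{n\ge 1}a_n(A,s)/n$. Each $a_n/n$ is continuous in $(A,s)$ as a finite sum of continuous functions (note $\|A_\ii\|>0$ because the $A_i$ are invertible), and a pointwise infimum of continuous functions is upper semicontinuous.

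For lower semicontinuity, my strategy is a \emph{bounded distortion plus pigeonhole} argument. Fix $(A,s)$ and $\epsilon>0$. Call a subset $W\subset\{1,\ldots,m\}^n$ distortion-controlled with constant $c>0$ if $\|A_{\ii_k}\cdots A_{\ii_1}\|\ge c^{k}\prod_{j=1}^{k}\|A_{\ii_j}\|$ for every $k\ge 1$ and all $\ii_1,\ldots,\ii_k\in W$. Summing $\|\cdot\|^{s}$ over all length-$kn$ concatenations of words from $W$ and letting $k\to\infty$ gives
$$M(A,s)\ge\frac{1}{n}\log\sum_{\ii\in W}\|A_\ii\|^s+\frac{s\log c}{n}.$$
If I can choose $n,W,c$ so that the first summand is $\ge M(A,s)-\epsilon$ and $|s\log c|/n\le\epsilon$, then, because the distortion-controlled property follows from an \emph{invariant cone} condition that is \emph{open} in the finite collection of matrices $\{A_\ii:\ii\in W\}$, the same $W$ works for all $(A',s')$ near $(A,s)$ (with a slightly worse $c$), yielding $M(A',s')\ge M(A,s)-3\epsilon$.

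The construction of $W$ is the main obstacle, and is where $d=2$ genuinely helps. For large $n$, I would partition $\{1,\ldots,m\}^n$ according to which pair of thin caps $(C_+,C_-)\subset\R P^1$ contains the top right singular direction of $A_\ii$ and the top right singular direction of $A_\ii^{T}$ (the direction of least contraction of $A_\ii$); finitely many cap pairs cover the required cases. Words sharing a cap pair admit a common invariant cone: each such $A_\ii$ maps $\R P^1\setminus C_-$ into $C_+$, and when $C_+\cap C_-=\varnothing$ any concatenation $A_{\ii_k}\cdots A_{\ii_1}$ has its singular directions controlled by those of the outermost factors, yielding $\|A_{\ii_k}\cdots A_{\ii_1}\|\asymp\prod_j\|A_{\ii_j}\|$ with constants independent of $k$. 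A pigeonhole then selects a cap class $W_\alpha$ capturing at least a constant fraction of $\sum_{|\ii|=n}\|A_\ii\|^{s}$, and refining $n$ together with the cap partition allows this fraction to be made $\ge 1-\epsilon$. In higher dimensions one must instead track flags of singular subspaces via exterior-power cocycles and Grassmannians, but in $d=2$ this single-cone bookkeeping suffices.
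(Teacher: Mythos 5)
Your upper-semicontinuity argument is exactly the paper's, and your high-level scheme for lower semicontinuity --- find a distortion-controlled family $W\subset I^n$ of words capturing most of the sum, then exploit that the invariant-cone condition is open in the matrices to pass to a neighborhood of $(A,s)$ --- is also the skeleton the paper uses (compare Lemmas \ref{lem:cone-condition}, \ref{lem:continuity-under-cone-condition} and \ref{lem:general-scheme}). The genuine gap is in the construction of $W$. Your cap-pigeonhole does not, as written, produce a family satisfying the crucial implication ``shared cap pair $\Rightarrow$ common strictly invariant cone with uniform distortion.'' Two things go wrong. First, a word $\ii$ with condition number $\alpha_1(A_\ii)/\alpha_2(A_\ii)$ of moderate size does \emph{not} map the complement of a thin projective cap into a thin cap; the ``$A_\ii$ sends $\R P^1\setminus C_-$ into $C_+$'' statement needs $\alpha_1/\alpha_2$ large compared to the reciprocal of the cap widths, and nothing in the pigeonhole forces the dominant words to have large condition number (indeed, when the equilibrium measure has equal Lyapunov exponents, they typically don't). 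Second, even among words of large condition number, the class selected by pigeonhole can be a \emph{bad} cap pair, namely one where $u_1(A_\ii)$ (the image of the expanding direction) lands near $v_2(A_\ii)=v_1(A_\ii)^{\perp}$; for such a class $\|A_\ii A_\jj\|$ can be of order $\alpha_1\alpha_2$ rather than $\alpha_1^2$, and no choice of $c$ uniform in $k$ makes the concatenation inequality $\|A_{\ii_k}\cdots A_{\ii_1}\|\ge c^k\prod_j\|A_{\ii_j}\|$ hold. The pigeonhole finds \emph{a} dominant class, not a \emph{distortion-controlled} dominant class, and the claim that ``refining $n$ together with the cap partition allows this fraction to be made $\ge 1-\epsilon$'' is asserted without an argument.

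It is exactly this selection problem that the paper's proof is designed to solve, and it uses genuinely different machinery: the Cao--Feng--Huang variational principle provides an ergodic measure $\mu$ nearly realizing the pressure, Oseledets' theorem splits the analysis into equal-exponent and distinct-exponent cases, and Khintchine's quantitative recurrence theorem (applied to the set $\Delta$ where the Oseledets splitting is close to a fixed reference splitting) furnishes arbitrarily large $n$ for which a $\mu$-positive-measure set of cylinders has the needed alignment at \emph{both} ends, which is what the cone condition requires. In the equal-exponent case the paper dispenses with cones entirely: Egorov plus uniform expansion at rate $\lambda$ gives the distortion bound directly. Your combinatorial approach is essentially the proof of the paper's Lemma \ref{lem:continuity-under-cone-condition}, which handles the already-cone-satisfying case; the step from there to the general case is precisely the ergodic-theoretic content that is missing from your sketch.
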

Although Theorems \ref{thm:affinity-dim-continuous} and \ref{thm:matrix-pressure-continuity} are in effect linear algebraic statements, the proofs make heavy use of dynamical systems theory, and in particular the variational principle for sub-additive potentials.

The goal of this survey is twofold. On one hand, it is an overview of the definition and main properties of the affinity dimension and the closely related singular value pressure, and the geometric reasons why it comes up naturally in the study of self-affine sets. On the other hand, it contains a full proof of Theorems \ref{thm:affinity-dim-continuous} and  \ref{thm:matrix-pressure-continuity} in the case of ambient dimension $d=2$ (for $d=1$, both results are trivial). The two-dimensional case captures many of the main ideas of the general case, while being technically much simpler.

I note that De-Jun Feng [private communication] has observed that a result of Bocker and Viana \cite{BockerViana10} on continuity of Lyapunov exponents for IID $\mathbb{R}^2$ matrix cocycles can be used to give a short alternative proof of Theorems \ref{thm:affinity-dim-continuous} and \ref{thm:matrix-pressure-continuity} in the case $d=2$. However, that proof does not generalize to any other dimensions.

\section{SVF, topological pressure, and affinity dimension}

\label{sec:SVF}

\subsection{Definition and basic properties of the SVF}

\label{subsec:SVF}

Recall that given a linear map $A\in \text{GL}_d(\R)$, its \emph{singular values} $\alpha_1(A)\ge \cdots \ge \alpha_d(A)> 0$ are the lengths of the semi-axes of the ellipsoid $A(B^d)$, where $B^d$ is the unit ball of $\R^d$. Alternatively, the singular values are the square roots of the eigenvalues of $A^*A$ (where $A^*$ is the adjoint of $A$). In particular, $\alpha_1(A)$ is nothing else than the Euclidean norm of $A$:
\[
\alpha_1(A) = \sup \{ \| A v\|: \|v\|=1\},
\]
where $\|v\|$ denotes the Euclidean norm of $v\in\R^d$. Likewise,
\[
\alpha_d(A) = \inf \{ \| Av\|: \|v\|=1\} = \|A^{-1}\|^{-1}.
\]
Also,
\[
\det(A) = \det(A^*A)^{1/2} = \prod_{i=1}^d \alpha_i(A).
\]
Given $s\in [0,d)$, we define the \emph{singular value function} (SVF) $\phi^s:\text{GL}_d(\R)\to (0,\infty)$ as follows. Let $m=\lfloor s\rfloor$. Then
\[
\phi^s(A) = \alpha_1(A)\cdots \alpha_m(A) \alpha_{m+1}(A)^{s-m}.
\]
An alternative way of expressing this is:
\begin{equation} \label{eq:SVF-using-exterior-norms}
\phi^s(A)= \|A\|_m^{m+1-s} \cdot \|A\|_{m+1}^{s-m},
\end{equation}
where
\[
\|A\|_k = \alpha_1(A)\cdots \alpha_k(A).
\]
The reason why \eqref{eq:SVF-using-exterior-norms} is useful is that $\|A\|_k$ is a sub-multiplicative seminorm ($\|A B\|_k\le \|A\|_k\|B\|_k$). Indeed, $\|A\|_k$ is the operator norm of $A$ when acting on the space of exterior $k$-forms. Alternatively, $\|A\|_k= \sup\{ \det(A|_\pi):\pi\in G(d,k)\}$ where $G(d,k)$ is the Grassmanian of $k$-dimensional linear subspaces of $\R^d$. As an immediate consequence of \eqref{eq:SVF-using-exterior-norms}, we get the following key property of the SVF:

\begin{lemma}[Sub-multiplicativity of the SVF] \label{lem:submultiplicatvity-of-SVF}
$\phi^s(AB)\le \phi^s(A)\phi^s(B)$ for all $A,B\in\text{GL}_d(\R)$ and $s\in[0,d)$.
\end{lemma}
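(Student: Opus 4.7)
The plan is to use the representation \eqref{eq:SVF-using-exterior-norms}, which rewrites $\phi^s(A)$ as a product of two factors involving the quantities $\|A\|_m$ and $\|A\|_{m+1}$ (with $m=\lfloor s\rfloor$), each raised to a \emph{nonnegative} power. The paper has already noted the key algebraic fact that $\|\cdot\|_k$ is a sub-multiplicative seminorm, arising from the interpretation of $\|A\|_k$ as the operator norm of the induced action of $A$ on the space $\Lambda^k\mathbb{R}^d$ of exterior $k$-forms (so that $\|AB\|_k\le\|A\|_k\|B\|_k$ follows immediately from sub-multiplicativity of operator norms applied to the composition $A|_{\Lambda^k}\circ B|_{\Lambda^k}$). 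With this in hand, the result is essentially a one-line manipulation.

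Concretely, I would set $m=\lfloor s\rfloor$, note that since $s\in[0,d)$ we have $0\le m\le d-1$ and hence $m+1\le d$, so both $\|\cdot\|_m$ and $\|\cdot\|_{m+1}$ are well defined and sub-multiplicative (with the convention $\|A\|_0=1$, which is trivially sub-multiplicative). Using \eqref{eq:SVF-using-exterior-norms}, I would write
\[
\phi^s(AB)=\|AB\|_m^{m+1-s}\cdot\|AB\|_{m+1}^{s-m}.
\]
Since $m+1-s\ge 0$ and $s-m\ge 0$, raising the sub-multiplicative inequalities $\|AB\|_m\le\|A\|_m\|B\|_m$ and $\|AB\|_{m+1}\le\|A\|_{m+1}\|B\|_{m+1}$ to these exponents preserves the direction of the inequality, and I would then separate the product into factors depending only on $A$ and on $B$, recognizing each as $\phi^s(A)$ and $\phi^s(B)$ via \eqref{eq:SVF-using-exterior-norms} once more.

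There is no real obstacle to overcome here: the argument is entirely formal once the identity \eqref{eq:SVF-using-exterior-norms} and the sub-multiplicativity of $\|\cdot\|_k$ are accepted. The only point requiring a moment's care is the (trivial) verification that the exponents $m+1-s$ and $s-m$ lie in $[0,1]$, which is exactly why the choice $m=\lfloor s\rfloor$ is made in the definition of $\phi^s$; if one tried to interpolate between $\|A\|_k$ and $\|A\|_{k+1}$ for some $k\ne\lfloor s\rfloor$, one would obtain a negative exponent and the inequality would reverse for one of the two factors.
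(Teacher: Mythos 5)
Your proof is correct and follows exactly the route the paper indicates: it invokes the representation \eqref{eq:SVF-using-exterior-norms}, the sub-multiplicativity of the exterior-power seminorms $\|\cdot\|_k$, and the fact that the exponents $m+1-s$ and $s-m$ are nonnegative. The paper treats the lemma as an immediate consequence of \eqref{eq:SVF-using-exterior-norms} without writing out the details, and your write-up supplies precisely those details.
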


It is also clear that $\phi^s(A)$ is jointly continuous in $s$ and $A$; it is also jointly real-analytic for non-integer $s$, but in general it is not even differentiable at $s=1,\ldots,d-1$ for a fixed $A$. We note also that $\phi^1(A)=\alpha_1(A)=\|A\|$ and $\lim_{s\to d}\phi^s(A)=\det(A)$. For completeness we define $\phi^s(A)=\det(A)^{s/d}$ for $s\ge d$, and note that this definition preserves all of the previous properties when $s\ge d$.

\subsection{SVF topological pressure}

\label{subsec:SVF-pressure}

Let $I=\{1,\ldots,m\}$. We denote by $I^*$ the family of finite words with symbols in $I$, and write $|\i|$ for the length of $\i\in I^*$. The space $X:=I^\N$ of right-infinite sequences is endowed with the left-shift operator $\sigma$, i.e. $\sigma(i_1 i_2\cdots) = (i_2 i_3 \cdots)$. Given $\i\in X$, the restriction of $\i$ to its first $k$ coordinates is denoted by $\i|k$. Finally, if $\j\in I^*$, then $[\j]\subset X$ is the family of all infinite sequences which start with $\j$.

Given $A=(A_1,\ldots, A_m)\in (\text{GL}_d(\R))^m$ and $\i=(i_1\cdots i_n)\in I^*$, we denote $A(\i) = A_{i_n}\cdots A_{i_1}$.  The next lemma introduces the main concept of this article.

\begin{lemma}
Given $A\in (\text{GL}_d(\R))^m$ and $s\ge 0$, let
\[
S_n(A,s) = \log\sum_{\i\in I^n} \phi^s(A(\i)).
\]
Then the limit
\begin{equation} \label{eq:def-topological-pressure}
P(A,s) := \lim_{n\to\infty} \frac{S_n(A,s)}{n}
\end{equation}
exists and equals $\inf_{n\ge 1} S_n(A,s)/n>-\infty$.
\end{lemma}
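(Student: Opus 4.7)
The plan is to verify that the sequence $(S_n(A,s))_{n\ge 1}$ is subadditive and bounded below by a linear function of $n$; the conclusion then follows from Fekete's classical lemma on subadditive sequences.

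First I would establish subadditivity, i.e.\ $S_{n+k}(A,s) \le S_n(A,s) + S_k(A,s)$. Any word $\mathbf{k}\in I^{n+k}$ factors uniquely as a concatenation $\i\j$ with $\i\in I^n$ and $\j\in I^k$, and under the convention $A(\i)=A_{i_n}\cdots A_{i_1}$ this gives $A(\i\j)=A(\j)A(\i)$. Applying Lemma~\ref{lem:submultiplicatvity-of-SVF} termwise and summing,
\begin{equation*}
\sum_{\mathbf{k}\in I^{n+k}} \phi^s(A(\mathbf{k})) \;=\; \sum_{\i\in I^n}\sum_{\j\in I^k} \phi^s(A(\j)A(\i)) \;\le\; \Bigl(\sum_{\i\in I^n} \phi^s(A(\i))\Bigr)\Bigl(\sum_{\j\in I^k} \phi^s(A(\j))\Bigr),
\end{equation*}
and taking logarithms gives subadditivity. (For $s\ge d$ the same bound follows at once from multiplicativity of the determinant, which handles the piecewise definition of $\phi^s$ uniformly.)

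Next I would check the lower bound that prevents $S_n/n$ from diverging to $-\infty$. Since each $A_i$ is invertible, $\alpha_d(A_i)=\|A_i^{-1}\|^{-1}>0$, and the elementary estimate $\alpha_d(BC)\ge \alpha_d(B)\alpha_d(C)$ (which follows from submultiplicativity of the operator norm applied to inverses) yields $\alpha_d(A(\i))\ge c_0^n$ for $\i\in I^n$, where $c_0:=\min_i \alpha_d(A_i)>0$. For $s\in[0,d)$ we have $\phi^s(A)\ge \alpha_d(A)^s$, hence
\begin{equation*}
S_n(A,s) \;\ge\; \log\bigl(m^n c_0^{sn}\bigr) \;=\; n\log(m\, c_0^{s}),
\end{equation*}
so $S_n/n$ is bounded below; the case $s\ge d$ is analogous using $\det$ in place of $\alpha_d^{\,s}$.

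Finally, with subadditivity and the lower bound in hand, Fekete's lemma applies: the limit $\lim_{n\to\infty} S_n(A,s)/n$ exists and equals $\inf_{n\ge 1} S_n(A,s)/n$, which is finite by the lower bound. No step here presents a genuine obstacle; the only mild subtlety is tracking the order of multiplication in $A(\i\j)$, so that the factorization inequality and hence submultiplicativity of $\phi^s$ can be applied in the correct order.
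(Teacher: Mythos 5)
Your proof is correct and follows essentially the same route as the paper's: submultiplicativity of $\phi^s$ (Lemma~\ref{lem:submultiplicatvity-of-SVF}) gives subadditivity of $S_n$, Fekete's lemma gives existence of the limit as the infimum, and a supermultiplicative lower bound on $\phi^s$ shows the infimum is finite. The only (cosmetic) difference is that you bound $\phi^s$ below by $\alpha_d(\cdot)^s$ and use supermultiplicativity of $\alpha_d$, whereas the paper cites a bound in terms of the determinant; your version is cleanly stated and avoids any ambiguity.
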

\begin{proof}
Lemma \ref{lem:submultiplicatvity-of-SVF} implies that the sequence $S_n=S_n(A,s)$ is subadditive, i.e. $S_{n+k}\le S_n+ S_k$. But it is well known that for any subadditive sequence $S_n$, the limit of $S_n/n$ exists and equals $\inf_{n\ge 1} S_n/n$. Finally, since $\phi^s(A)\ge \det(A)^s$ and we are assuming that the maps $A_i$ are invertible, one can easily check that
\[
\lim_{n\to\infty} \frac{S_n}{n} \ge \log\left(\sum_{i\in I} \det(A(\i))^s\right) > -\infty.
\]
\end{proof}

\begin{definition}
The function $P(A,s)$ defined in \eqref{eq:def-topological-pressure} is called the \emph{SVF topological pressure}.
\end{definition}

It is instructive to compare the definitions of $P(A,s)$ and $M(A,s)$ given in \eqref{eq:def-matrix-pressure}. Both quantities coincide for $0\le s\le 1$; for $s>1$, the definition of $P(A,s)$ takes into account different singular values of the matrix products involved. Since $\phi^s(A)\le \|A\|^s$, one always has $P(A,s)\le M(A,s)$.

The following lemma summarizes some elementary but important continuity properties of the topological pressure.

\begin{lemma} \label{lem:properties-SVF-pressure}
The following hold:
\begin{enumerate}
\item Given $A=(A_1,\ldots,A_m)\in(\text{GL}_d(\R))^m$, let
\[
\alpha_*=\min_{i\in I}\{ \alpha_d(A_i)\},\quad \alpha^*=\max_{i\in I}\{ \alpha_1(A_i)\}.
\]
Then
\[
(\log\alpha_*) \eps \le P(A,s+\eps)-P(A,s) \le (\log\alpha^*) \eps.
\]
\item For fixed $A=(A_1,\ldots,A_m)\in(\text{GL}_d(\R))^m$, the function $s\to P(A,s)$ is Lipschitz continuous; the Lipschitz constant is uniform in a neighborhood of $A$.
\item $P(A,s)$ is upper semicontinuous (as a function of both $A$ and $s$).
\end{enumerate}
\end{lemma}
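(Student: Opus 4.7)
The key observation driving part (1) is that, for a fixed $B\in\text{GL}_d(\R)$, the function $s\mapsto \log\phi^s(B)$ is continuous in $s$ and piecewise linear with slope $\log\alpha_{\lfloor s\rfloor+1}(B)$ on each interval $(m,m+1)$ inside $[0,d]$ (and slope $\log\det(B)/d$ for $s\ge d$); in every case this slope lies in $[\log\alpha_d(B),\log\alpha_1(B)]$. Integrating this bound from $s$ to $s+\eps$ gives
\[
\alpha_d(B)^\eps \le \phi^{s+\eps}(B)/\phi^s(B) \le \alpha_1(B)^\eps.
\]
I would apply this to $B=A(\i)$ for $\i\in I^n$. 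Submultiplicativity of $\|\cdot\|$ and of $\|(\cdot)^{-1}\|$ gives
\[
\alpha_1(A(\i)) \le (\alpha^*)^n, \qquad \alpha_d(A(\i)) \ge (\alpha_*)^n,
\]
so summing over $\i\in I^n$, taking logarithms, dividing by $n$ and passing to the limit yields
\[
\eps\log\alpha_* \le P(A,s+\eps)-P(A,s) \le \eps\log\alpha^*,
\]
which is (1).

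Part (2) is then immediate: (1) gives $|P(A,s+\eps)-P(A,s)|\le \eps\max(|\log\alpha_*|,|\log\alpha^*|)$, so $s\mapsto P(A,s)$ is Lipschitz with constant $\max(|\log\alpha_*(A)|,|\log\alpha^*(A)|)$. Since $A\mapsto \alpha_*(A)$ and $A\mapsto \alpha^*(A)$ are continuous functions into $(0,\infty)$ (being minima/maxima of continuous singular value functions over the finite index set $I$), the Lipschitz constant stays bounded in a neighborhood of any fixed $A$, giving local uniformity.

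For (3), the natural move is to exploit the identity $P(A,s) = \inf_{n\ge 1} S_n(A,s)/n$ established in the proof of the preceding lemma. For each fixed $n$, the map $(A,s)\mapsto S_n(A,s)/n$ is jointly continuous: $A(\i)$ depends continuously on $A$, the singular values $\alpha_k(\cdot)$ depend continuously on their matrix argument, and the SVF $\phi^s(B)$ is jointly continuous in $(s,B)$ (the piecewise definition matches at integer $s$). A pointwise infimum of a family of continuous functions is upper semicontinuous, which is exactly (3).

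The only place requiring minor care is the interface between the cases $s<d$ and $s\ge d$ in the piecewise linear argument for (1), and the matching of the two pieces of the definition of $\phi^s$ at integer $s$; both are handled by observing that $\log\phi^s(B)$ is continuous in $s$ across these transitions and that the one‑sided slopes remain in $[\log\alpha_d(B),\log\alpha_1(B)]$. Beyond this bookkeeping, nothing in the argument is genuinely hard.
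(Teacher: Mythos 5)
Your proof is correct and follows the same route as the paper: you obtain the key pointwise bound $\alpha_d(B)^\eps\le\phi^{s+\eps}(B)/\phi^s(B)\le\alpha_1(B)^\eps$ (you via the piecewise-linear slope argument, the paper by stating it directly), combine it with the sub/supermultiplicativity of $\alpha_1$ and $\alpha_d$ to control $S_n(A,s+\eps)-S_n(A,s)$, deduce (2) from (1) together with continuity of $\alpha_*,\alpha^*$ in $A$, and obtain (3) from $P=\inf_n S_n/n$ being an infimum of continuous functions.
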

\begin{proof}
Note that $\phi^s(B)\alpha_d(B)^\eps \le \phi^{s+\eps}(B) \le \phi^s(B)\alpha_1(B)^\eps$ for any $s,\eps>0$ and $B\in \text{GL}_d(\R)$. Also, $\alpha_1(B)=\|B\|$ is sub-multiplicative and $\alpha_d(B)=\|B^{-1}\|^{-1}$ is super-multiplicative. Combining these facts yields
\[
{n\eps}\log\alpha_*+ S_n(A,s) \le S_n(A,s+\e) \le n\eps\log\alpha^*+ S_n(A,s),
\]
which yields the first claim. The second claim is immediate from the first, and the fact that $\alpha^*,\alpha_*$ are continuous functions of $A$.

Finally, upper semicontinuity follows since $P(A,s)=\inf_{n\ge 1} S_n(A,s)/n$ is an infimum of continuous functions.
\end{proof}

In light of the previous lemma, it seems natural to ask whether $P(A,s)$ is not just upper semicontinuous but in fact continuous. As we will see in the next section, this question is closely linked to Theorem \ref{thm:affinity-dim-continuous}. Falconer and Sloan \cite{FalconerSloan09} proved continuity of $P$ at tuples of linear maps satisfying certain assumptions, and raised the general continuity problem. De-Jun Feng and the author \cite{FengShmerkin13} recently proved that continuity always holds:

\begin{theorem} \label{thm:continuity-pressure}
The map $(A,s)\to P(A,s)$ is continuous on $(\text{GL}_d(\R))^m\times [0,+\infty)$.
\end{theorem}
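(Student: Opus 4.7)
The plan is to combine Lemma \ref{lem:properties-SVF-pressure} with the subadditive variational principle and a reduction to continuity of the top Lyapunov exponent in ambient dimension $d=2$. By parts (2) and (3) of Lemma \ref{lem:properties-SVF-pressure}, $P$ is already jointly upper semicontinuous and Lipschitz in $s$ with a constant uniform on neighborhoods of any fixed $A$; hence it suffices to establish lower semicontinuity of $A\mapsto P(A,s)$ at each fixed $s\in[0,d)$, and joint continuity will follow. The extreme regimes are immediate: $P(A,0)=\log m$ is constant, and for $s\ge d=2$ the SVF $\phi^s=(\det)^{s/2}$ is multiplicative, so $P(A,s)=\log\sum_{i}|\det A_i|^{s/2}$ is manifestly continuous.

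For $s\in(0,2)$, I would invoke the subadditive variational principle of Cao--Feng--Huang:
\[
P(A,s)\;=\;\sup_{\mu}\bigl\{\,h_\mu(\sigma)+\Lambda^s(A,\mu)\,\bigr\},\qquad \Lambda^s(A,\mu):=\lim_{n\to\infty}\frac{1}{n}\int\log\phi^s(A(\i|n))\,d\mu(\i),
\]
the supremum running over $\sigma$-invariant probability measures on $X$ (which by ergodic decomposition may be taken ergodic). Given a sequence $A^{(k)}\to A$ and $\eps>0$, pick an ergodic $\mu$ with $h_\mu+\Lambda^s(A,\mu)>P(A,s)-\eps$. Since $P(A^{(k)},s)\ge h_\mu+\Lambda^s(A^{(k)},\mu)$ and $h_\mu$ does not depend on $A$, the desired lower bound follows from the single assertion
\[
\Lambda^s(A^{(k)},\mu)\longrightarrow\Lambda^s(A,\mu).
\]

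Because $d=2$, the structure of $\phi^s$ reduces this to continuity of the top Lyapunov exponent $\chi_1(A,\mu):=\lim_n\frac{1}{n}\log\|A(\i|n)\|$. For $s\in[0,1]$ we have $\phi^s(A)=\|A\|^s$ and hence $\Lambda^s(A,\mu)=s\,\chi_1(A,\mu)$; for $s\in[1,2]$ we have $\phi^s(A)=\alpha_1(A)\alpha_2(A)^{s-1}=\|A\|^{2-s}|\det A|^{s-1}$, and since $\log|\det A(\i|n)|$ is an additive Birkhoff sum, Birkhoff's ergodic theorem gives
\[
\Lambda^s(A,\mu)=(2-s)\,\chi_1(A,\mu)+(s-1)\sum_{i\in I}\mu([i])\log|\det A_i|.
\]
The determinant term is linear in the continuous quantities $A\mapsto\log|\det A_i|$, so the entire problem reduces to showing that $A\mapsto\chi_1(A,\mu)$ is continuous at $A$ for the fixed ergodic $\mu$.

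Upper semicontinuity of $\chi_1(\cdot,\mu)$ is automatic from Kingman's infimum formula $\chi_1(A,\mu)=\inf_n\frac{1}{n}\int\log\|A(\i|n)\|\,d\mu$. The hard step, and the main obstacle, is lower semicontinuity, which for general matrix cocycles can fail. I would handle it via a reducibility dichotomy. If the matrices $A_i$ share a common invariant line, then in a suitable basis the cocycle is upper triangular, and $\chi_1(A,\mu)$ becomes the maximum of two Birkhoff averages of $\log|\cdot|$ of diagonal entries, patently continuous in $A$. In the irreducible case I would lift $\mu$ to a $T_A$-invariant probability $\tilde\mu_A$ on the projective skew product $T_A(\i,v)=(\sigma\i,A_{i_1}v/\|A_{i_1}v\|)$ on $X\times\mathbb{P}^1$ that realizes $\chi_1(A,\mu)=\int\log(\|A_{i_1}v\|/\|v\|)\,d\tilde\mu_A$; the integrand is jointly continuous in $A$ and $(\i,v)$, so the task becomes a weak-$*$ compactness argument showing that any accumulation point of $\tilde\mu_{A^{(k)}}$ is $T_A$-invariant, projects to $\mu$, and still attains the growth rate $\chi_1(A,\mu)$. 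Making this compactness argument robust---in particular excluding concentration of the projective fibers onto a direction that under-reports the growth in the limit---is precisely where the serious work lies, and is the step that the rigorous proof must carry out with care.
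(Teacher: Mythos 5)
Your reduction to lower semicontinuity of $A\mapsto \chi_1(A,\mu)$ for a fixed ergodic $\mu$ is correct (the formulas $\Lambda^s = s\chi_1$ for $s\le 1$ and $\Lambda^s=(2-s)\chi_1+(s-1)\int\log|\det|\,d\mu$ for $1\le s\le 2$ are right), and this is the route alluded to in the introduction via Bocker--Viana. But the crucial step in the irreducible case, which you yourself flag as ``where the serious work lies,'' is not carried out and in fact conceals an obstruction that the rest of your outline does not address. The projective-lift / weak-$*$ compactness scheme works cleanly for IID products because, under irreducibility and non-compactness, the Furstenberg stationary measure on $\mathbb{P}^1$ is \emph{unique}, so the accumulation point is forced to be the right one. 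But the measure $\mu$ you extract from the variational principle is an arbitrary ergodic $\sigma$-invariant measure --- not Bernoulli, not even Markov --- and for such $\mu$ there can be multiple $T_A$-invariant lifts projecting to $\mu$ realizing different exponents ($\chi_1$ along $E_1$, $\chi_2$ along $E_2$). Nothing in your sketch rules out the perturbed lifts $\tilde\mu_{A^{(k)}}$ accumulating on the ``wrong'' one (the $E_2$-lift), which would give only $\liminf_k\chi_1(A^{(k)},\mu)\ge\chi_2(A,\mu)$. This is precisely the scenario that must be excluded, and it is a genuine difficulty, not a technicality. Bocker--Viana's argument, and the pinching/twisting machinery it relies on, does not transfer to general ergodic $\mu$.

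The paper's proof sidesteps this entirely and uses a different dichotomy. Rather than splitting on reducibility of the tuple $A$, it applies Oseledets to the specific pair $(A,\mu)$ and splits on whether the Lyapunov exponents are equal or distinct. In the distinct case, one locates (via Egorov and the Khintchine quantitative recurrence theorem) a large finite family $Y_n$ of cylinder words of length $n$ on which the Oseledets data is near a fixed splitting $F_1\oplus F_2$ at both endpoints; Lemma~\ref{lem:cone-condition-oseledets} then shows the corresponding matrix products satisfy a strict cone condition. The cone condition is robust under small perturbations of $A$ and, crucially, is stable under concatenation, which is exactly what the general scheme (Lemma~\ref{lem:general-scheme}) needs. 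This constructs, by hand, a subsystem witnessing a lower bound on $P(B,s)$ for all $B$ near $A$, without ever needing continuity of $\chi_1(\cdot,\mu)$ itself. In short: you have correctly identified the standard reduction and the hard case, but you have not supplied the key idea (recurrence to a fixed Oseledets splitting plus a cone argument) that makes the hard case tractable for the non-Bernoulli measures that arise here, and the route you gesture at does not have an obvious way to exclude the failure mode I described.
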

A proof of this theorem in dimension $d=2$ will be presented in Section \ref{sec:proof}.

\subsection{Affinity dimension and self-affine sets}

\label{subsec:affinity-dim}

So far, no assumptions have been made on the maps $A_i$, other than invertibility. However, the motivation for the study of the SVF topological pressure came from the theory of self-affine sets, and in this context the maps $A_i$ are strict contractions.

\begin{lemma}
If $A=(A_1,\ldots, A_m)\in(\text{GL}_d(\R))^m$ and $\|A_i\|<1$ for all $i\in I$, then $s\to P(A,s)$ is a continuous, strictly decreasing function of $s$ on $[0,\infty)$, and has a unique zero on $(0,\infty)$.
\end{lemma}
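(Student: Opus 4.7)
The plan is to assemble this directly from estimates already at our disposal. Continuity of $s\mapsto P(A,s)$ on $[0,\infty)$ is immediate from part (2) of Lemma \ref{lem:properties-SVF-pressure}. For strict monotonicity, the crucial point is that the contraction hypothesis $\|A_i\|<1$ forces $\alpha^*=\max_i\alpha_1(A_i)<1$ and hence $\log\alpha^*<0$; part (1) of the same lemma then yields
\[
P(A,s+\eps)-P(A,s)\le (\log\alpha^*)\,\eps<0
\]
for every $\eps>0$, which gives the required strict decrease on $[0,\infty)$.

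It remains to verify the endpoint behavior so that we can apply the intermediate value theorem. Since $\phi^0\equiv 1$, we have $S_n(A,0)=\log(m^n)=n\log m$, so $P(A,0)=\log m$; this is strictly positive provided $m\ge 2$ (the degenerate case $m=1$ would simply place the unique zero at $s=0$). For the behaviour as $s\to\infty$, I would use the elementary bound $\phi^s(B)\le \|B\|^s$ for all $B$ (each singular value of $B$ is at most $\alpha_1(B)=\|B\|$, and the total exponent in the definition of $\phi^s$ equals $s$), combined with the submultiplicativity of the operator norm:
\[
S_n(A,s)\le \log\sum_{\i\in I^n}\|A(\i)\|^s \le n\bigl(\log m + s\log\alpha^*\bigr).
\]
Dividing by $n$ and letting $n\to\infty$ gives $P(A,s)\le \log m + s\log\alpha^*$, which tends to $-\infty$ since $\log\alpha^*<0$.

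With continuity, strict monotonicity, $P(A,0)>0$, and $P(A,s)\to -\infty$ in hand, the intermediate value theorem together with strict monotonicity produces a unique $s\in(0,\infty)$ with $P(A,s)=0$. There is no real obstacle here: this lemma is essentially a bookkeeping consequence of Lemma \ref{lem:properties-SVF-pressure} together with the trivial endpoint computations, and the contraction hypothesis enters exactly at the step $\log\alpha^*<0$, which is what turns the Lipschitz control of part (1) into \emph{strict} decrease and simultaneously drives $P(A,s)$ to $-\infty$ at infinity.
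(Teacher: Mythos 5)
Your proof is correct and follows essentially the same route as the paper: continuity and strict decrease are read off from Lemma \ref{lem:properties-SVF-pressure} using $\alpha^*<1$, $P(A,0)=\log m>0$, and $P(A,s)\to-\infty$ as $s\to\infty$. The only (cosmetic) difference is in the last step: the paper bounds $P(A,s)\le S_1(A,s)=\log\sum_{i\in I}\phi^s(A_i)$ via $P=\inf_n S_n/n$, while you derive the same divergence from $\phi^s\le\|\cdot\|^s$ and submultiplicativity of the norm.
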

\begin{proof}
That $P(A,s)$ is continuous and strictly decreasing in $s$ follows immediately from Lemma \ref{lem:properties-SVF-pressure}, since $\alpha^*<1$ when the maps are strict contractions. By definition $P(A,0)=\log m>0$. On the other hand,
\[
P(A,s)\le \log\left(\sum_{i\in I} \phi^s(A_i)\right) \to -\infty\quad\text{ as }s\to \infty.
\]
Hence $P(A,\cdot)$ has a unique zero.
\end{proof}

\begin{definition}
Given $A=(A_1,\ldots, A_m)\in(\text{GL}_d(\R))^m$ with $\|A_i\|<1$ for all $i\in I$, its \emph{affinity dimension} is the unique positive root $s$ of the pressure equation $P(A,s)=0$.
\end{definition}

Note that Theorem \ref{thm:affinity-dim-continuous} is in fact an immediate corollary of Theorem \ref{thm:continuity-pressure}.

In the rest of this section we indicate why this number is relevant in the study of self-affine sets; this material is by now standard. To begin, let us recall the definition of Hausdorff dimension in terms of Hausdorff content:
$\dim_H(E)=\inf\{ s: \mathcal{H}_\infty^s(E)=0\}$, where
\[
\mathcal{H}_\infty^s(E) = \inf\left\{ \sum_i r_i^s: E\subset B(x_i,r_i) \right\}.
\]

Now suppose $E=\bigcup_{i\in I} A_i E+t_i$ is  the invariant set of the IFS $\{ A_i x+t_i\}_{i\in I}$. Since the maps $f_i(x)=A_i x+t_i$ are strict contractions, for all large enough $R$ the closed ball $B_R$ of radius $R$ and center at the origin is mapped into its interior by all the maps $f_i$. Let $E_0=B_R$ and define inductively $E_{k+1}=\bigcup_{i\in I} f_i(E_k)$. By our choice of $R$, it is easy to see inductively that $E_k$ is a decreasing sequence of compact sets; moreover, if we call $F=\bigcap_{k=0}^\infty E_k$, then one can check that $F=\bigcup_{i\in I} f_i(F)$. Thus $E=F$ by uniqueness.

The above discussion shows that $E$ is covered by $E_k$ for any $k$; moreover, by construction
\[
E_k = \bigcup_{i\in I^k} f_{i_1}\cdots f_{i_k}(B_R) =: \bigcup_{i\in I^k} U_{i_1\cdots i_k},
\]
where $U_{i_1\cdots i_k}$ is an ellipsoid with semi-axes $R \alpha_1(A_{i_1}\cdots A_{i_k})\ge \ldots\ge R \alpha_d(A_{i_1}\cdots A_{i_k})$. This shows that there is a natural cover of the self-affine sets by \emph{ellipsoids}. In order to estimate Hausdorff content (and hence Hausdorff dimension) effectively, one needs to find efficient coverings by \emph{balls}. What Falconer observed is that we can cover each ellipsoid efficiently by balls, in a way that depends on the dimension of the Hausdorff content we are trying to estimate. Namely, for each integer $1\le m<d$, we can cover an ellipsoid in $\R^d$ with semi-axes $\alpha_1\ge\cdots \ge\alpha_d$ by a parallelepiped with sides $2\alpha_1\ge \cdots\ge 2\alpha_d$. In turn, this can be covered by at most
\[
(4R\alpha_1/\alpha_m)\cdots (4R\alpha_{m-1}/\alpha_m)(4R)^{d-m+1}
\]
cubes of side length $\alpha_m$, each of which is contained in a ball of radius $\sqrt{d}\alpha_m$. It turns out that if we want to estimate $\mathcal{H}_\infty^s(E)$ by covering each of the ellipsoids that make up $E_k$ in this way, independently of each other, the optimal choice of $m$ is $\lfloor s\rfloor$. This particular choice yields (after some straightforward calculations) a bound
\[
\mathcal{H}_\infty^s(E) \le C_{R,d} \sum_{(i_1,\ldots,i_k)\in I^k} \phi^s(A_{i_1}\cdots A_{i_k}).
\]
From here one can deduce that if $P(A,t)<0$, then $\mathcal{H}_\infty^t(E)=0$, whence $\dim_H(E)\le t$. Letting $t\to s$, the affinity dimension, finally shows that $\dim_H(E)\le s$.

The argument above can be modified to reveal that the affinity dimension is also an upper bound for the upper box counting dimension of $E$. Thus, we can say that the affinity dimension is a \emph{candidate} to the (Hausdorff, or box-counting) dimension of a self-affine set, obtained by using the most natural coverings, and is always an upper bound for both the box-counting and Hausdorff dimension. In general, these natural coverings may be far from optimal. For example, many of the ellipsoids making up $E_k$ may overlap substantially or be aligned in such a way that it is far more efficient to cover them together rather than separately. Also, most of the cubes that we employed to cover each ellipsoid might not intersect $E$ at all. And indeed, it may happen that the Hausdorff dimension, and/or the box-counting dimension are strictly smaller than the affinity dimension; this is the case for many kinds of self-affine carpets, see e.g. \cite{Baranski07} and references therein. However, it is perhaps surprising that, as discovered by Falconer \cite{Falconer92}, \emph{typically} the Hausdorff and box-counting dimensions of self-affine sets do coincide with the affinity dimension, in the following precise way:

\begin{theorem} \label{thm:Falconer}
Let $A=(A_1,\ldots,A_m)$, with the $A_i$ invertible linear maps on $\R^d$. Assume further that $\|A_i\|<1/2$ for all $i\in I$. Given $t_1,\ldots,t_m\in\R^d$, denote by $E(t_1,\ldots,t_m)$ the self-affine set corresponding to the IFS $\{ A_i x+t_i\}_{i\in I}$.

Then for Lebesgue-almost all $(t_1,\ldots,t_m)\in\R^{md}$, the Hasudorff dimension of $E(t_1,\ldots,t_m)$ equals the affinity dimension of $A$.
\end{theorem}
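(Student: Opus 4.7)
The upper bound $\Hd E(t_1,\ldots,t_m) \le s$ was established in the discussion preceding the statement and holds for every choice of translations, so the content of the theorem is the matching lower bound $\Hd E(t_1,\ldots,t_m) \ge \min(d,s)$ for Lebesgue-a.e.\ $(t_1,\ldots,t_m)$. My plan is to prove this by the potential-theoretic energy method: for each $0<t<\min(d,s)$, I will exhibit, for a.e.\ $\mathbf{t}$ in any fixed bounded box $Q\subset \R^{md}$, a probability measure $\nu_\mathbf{t}$ on $E(\mathbf{t})$ with finite $t$-energy, and then let $t\nearrow\min(d,s)$ along a countable sequence. The natural candidate is the pushforward $\nu_\mathbf{t} = (\pi_\mathbf{t})_*\mu$ of a suitably chosen probability measure $\mu$ on $X=I^\N$ under the coding map
\[
\pi_\mathbf{t}(\ii) \;=\; \sum_{k=1}^{\infty} A_{i_1}\cdots A_{i_{k-1}}\, t_{i_k},
\]
whose image equals $E(\mathbf{t})$.

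By Fubini it suffices to show finiteness of the averaged $t$-energy
\[
\mathcal{E}(\mu,t) \;=\; \int_Q \iint_{X\times X} |\pi_\mathbf{t}(\ii)-\pi_\mathbf{t}(\jj)|^{-t}\, d\mu(\ii)\,d\mu(\jj)\, d\mathbf{t}.
\]
Grouping pairs $\ii\neq\jj$ by their longest common prefix $\i=(i_1,\ldots,i_n)$ and invoking the identity $\pi_\mathbf{t}(\ii)-\pi_\mathbf{t}(\jj) = A_{i_1}\cdots A_{i_n}\bigl(\pi_\mathbf{t}(\sigma^n\ii)-\pi_\mathbf{t}(\sigma^n\jj)\bigr)$, the crux is an averaged transversality estimate for the inner integral over $\mathbf{t}$. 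The hypothesis $\|A_i\|<1/2$ is used here to ensure that, viewed as a linear function of $\mathbf{t}$, the difference $\pi_\mathbf{t}(\sigma^n\ii)-\pi_\mathbf{t}(\sigma^n\jj)$ is nondegenerate in the two coordinates $t_{i_{n+1}}$ and $t_{j_{n+1}}$ (which are distinct by the choice of $n$), uniformly in $n$ and in $(\ii,\jj)$. A change of variables then bounds the inner integral by a constant multiple of $\int_{B(0,R)}|A_{i_1}\cdots A_{i_n} v|^{-t}\,dv$, which, by decomposing $\R^d$ along the singular directions of the product and using $t<d$, is in turn bounded by $C_{d,t,Q}\,\phi^t(A_{i_1}\cdots A_{i_n})^{-1}$.

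Collecting these bounds yields
\[
\mathcal{E}(\mu,t) \;\le\; C \sum_{n\ge 0}\sum_{\i\in I^n} \mu([\i])^2\,\phi^t(A_{i_1}\cdots A_{i_n})^{-1},
\]
and one must choose $\mu$ so that this is finite for every $t<s$. Since $\sum_{\i\in I^n}\phi^t(A_{i_1}\cdots A_{i_n}) = e^{S_n(A,t)}$ after re-indexing by reversal $\i\mapsto(i_n,\ldots,i_1)$, a clean choice of $\mu$ is an equilibrium (Gibbs-type) measure adapted to the subadditive potential $\log\phi^t$, for which $\mu([\i])$ is comparable to $\phi^t(A_{i_1}\cdots A_{i_n})/e^{S_n(A,t)}$; the inner sum then collapses to $e^{-S_n(A,t)}$, which decays geometrically in $n$ since $P(A,t)>0$ by strict monotonicity of $P(A,\cdot)$ (guaranteed by $\|A_i\|<1$) and the defining equation $P(A,s)=0$. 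The main obstacle I anticipate is the transversality step: proving that $\|A_i\|<1/2$ really allows the change of variables in $\mathbf{t}$ to be carried out uniformly over all codings, and that the resulting $\mathbf{t}$-averages are controlled \emph{sharply} by $\phi^t$ rather than by coarser substitutes such as $\|A(\i)\|^{-t}$ or $\det(A(\i))^{-t/d}$, since only this sharp bound makes the pressure $P(A,t)$ the correct exponent driving the geometric decay of the final sum.
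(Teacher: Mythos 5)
The paper does not actually prove Theorem~\ref{thm:Falconer}; it cites it to Falconer \cite{Falconer92} (with Solomyak's \cite{Solomyak98} sharpening of the constant from $1/3$ to $1/2$), and merely sketches the upper-bound argument in the surrounding text. So the relevant comparison is with Falconer's original proof, and your outline is essentially that proof: push a symbolic measure under the coding map, estimate the averaged $t$-energy over translations, group by longest common prefix, and conclude via Frostman. You also correctly observe that the result should really read $\min(d,s)$, since the Hausdorff dimension is trivially $\le d$.

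Two points deserve comment. First, the transversality lemma --- that $\int_Q |\pi_{\mathbf t}(\ii)-\pi_{\mathbf t}(\jj)|^{-t}\,d\mathbf{t} \le C\,\phi^t(A_\i)^{-1}$ uniformly when $\i$ is the common prefix --- is not a routine change of variables; it is the technical heart of \cite{Falconer92}, and the role of $\|A_i\|<1/2$ is to guarantee that, after isolating the coordinate $t_{i_{n+1}}$, the matrix multiplying it (namely the identity plus a geometric series of products of the $A_i$, with signs) is boundedly invertible, uniformly in $n$, $\ii$, $\jj$. Getting the sharp $\phi^t$ on the right-hand side (rather than $\|A_\i\|^{-t}$ or $\det(A_\i)^{-t/d}$) requires the singular-value decomposition of $A_\i$ and the separate treatment of the directions with $\alpha_j\ge\alpha_{\lceil t\rceil}$ and $\alpha_j<\alpha_{\lceil t\rceil}$, and Solomyak's improvement to $1/2$ involves additional bookkeeping; you correctly flag this as the main obstacle, but as written it is an unproved claim. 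Second, your appeal to a ``Gibbs-type'' measure is slightly imprecise: equilibrium states for subadditive SVF potentials need not satisfy the two-sided Gibbs bound $\mu([\i])\asymp \phi^t(A_\i)/e^{S_n(A,t)}$ (this fails without irreducibility-type hypotheses, cf.\ \cite{FengKaenmaki11}). What one actually needs --- and what Falconer constructs by a direct mass-distribution argument in \cite{Falconer88,Falconer92} --- is a probability measure with the \emph{one-sided} bound $\mu([\i])\le b\,\phi^t(A_\i)\,e^{-|\i|\,P(A,t)}$. Plugging this into your final sum gives $\sum_n \sum_{\i\in I^n}\mu([\i])^2\phi^t(A_\i)^{-1} \le b\sum_n e^{-nP(A,t)}$, which converges precisely because $t<s$ forces $P(A,t)>0$; this is cleaner and avoids relying on a Gibbs property that may fail.
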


We remark that Falconer proved the second part under the assumption $\|A_i\|<1/3$. Solomyak \cite{Solomyak98} later pointed out a modification in the proof that allows to replace $1/3$ by $1/2$. By an observation of Edgar \cite{Edgar92}, the bound $1/2$ is optimal. Since Falconer's pioneering work, many advances have been obtained in this direction. A natural question is whether one can give \emph{explicit} conditions under which the Hausdorff and/or box-counting dimensions equal the affinity dimension; this was achieved in \cite{Falconer92, HueterLalley95, KaenmakiShmerkin09}. In a different direction, Falconer and Miao \cite{FalconerMiao08} provided a bound on the dimension of exceptional parameters $(t_1,\ldots,t_m)$. For other recent directions in the study of the dimension of self-affine sets, see the survey \cite{Falconer13}.

\section{Further background}

\label{sec:background}

\subsection{Subadditive thermodynamic formalism}

\label{subsec:therm-form}

The topological pressure $P(\varphi)$ of a H\"{o}lder continuous potential $\varphi$ is a key component of the thermodynamic formalism, which in turn, as discovered by Bowen, is a formidable tool in the dimension theory of conformal dynamical systems. In the classical setting, the functional $P$ is continuous as a function of $\varphi$ in the appropriate topology.

It is well-known that the dimension theory of \emph{non-conformal} dynamical systems is far more difficult, and the classical thermodynamic formalism is no longer the appropriate tool. Instead, starting with the insights of Barreira \cite{Barreira96} and Falconer \cite{Falconer88}, a \emph{sub-additive} thermodynamic formalism has been developed. Both the thermodynamic formalism itself and its application to the dimension of invariant sets and measures is far more difficult in the non-conformal case. The proofs of Theorems \ref{thm:continuity-pressure} and \ref{thm:matrix-pressure-continuity} depend crucially on this subadditive thermodynamic formalism, and hence we review the main elements in this section.

We limit ourselves to potentials defined on the full shift on $m$ symbols $X=I^\N$. Let $\Phi=\{ \phi_n\}_{n=1}^\infty$ be a collection of continuous real-valued maps on $X$. We say that $\Phi$ is \emph{subadditive} if
\begin{equation} \label{eq:subadditive}
\varphi_{k+n}(\i) \le \varphi_k(\i)+\varphi_n(\sigma^k \i) \quad\text{for all  }\i\in X.
\end{equation}

Important examples of subadditive potentials, which will be relevant in the proofs of Theorems \ref{thm:continuity-pressure} and \ref{thm:matrix-pressure-continuity}, are
\begin{align}
\varphi_n(\i) &= s\log\|A_{i_n}\cdots A_{i_1}\|, \label{eq:potential-norm}\\
\varphi_n(\i) &= \phi^s(A_{i_n}\cdots A_{i_1}). \label{eq:potential-SVF}
\end{align}
We note that the order of the products is the reverse of the order usually considered in the IFS literature; the reason for this will become apparent later when we apply Oseledets' Theorem. Let $\mathcal{E}$ denote the set of probability measures ergodic and invariant under the shift $\sigma$. The thermodynamic formalism consists of three main pieces: the \emph{entropy} $h_\mu$ of a measure $\mu\in\mathcal{E}$, the \emph{topological pressure} $P(\Phi)$ of a subadditive potential $\Phi$, and the \emph{energy}  or \emph{Lyapunov exponent} $E_\mu(\Phi)$ of $\Phi$ with respect to a measure $\mu\in\mathcal{E}$. These are defined as follows:
\begin{align*}
h_\mu &= \lim_{n\to\infty} \frac{1}{n} \sum_{\i\in I^n} - \mu[\i] \log \mu[\i],\\
P(\Phi) &= \lim_{n\to\infty} \frac{1}{n}\log \sum_{\i\in I^n} \sup_{\j\in\i} \phi_n(\j),\\
E_\mu(\Phi) &= \lim_{n\to\infty} \frac{1}{n}\log \int \phi_n\,d\mu.
\end{align*}
By standard subadditivity arguments, all the limits exist. Moreover, if $\mu\in\mathcal{E}$, then for $\mu$-almost all $\i$,
\begin{align}
h_\mu &= \lim_{n\to\infty} \frac{-\log \mu[\i|n]}{n}, \label{eq:ShannonMcMillanBreiman}\\
E_\mu(\Phi) &= \lim_{n\to\infty} \frac{\log \phi_n(\i)}{n}. \label{eq:subadditive-ergodic-theorem}
\end{align}
The first equality is a particular case of the Sannon-McMillan-Breiman, while the second is a consequence of Kingman's subadditive ergodic theorem.

These quantities are related via the following \emph{variational principle} due to Cao, Feng and Huang \cite{CFH08}:
\begin{theorem}[\cite{CFH08}, Theorem 1.1] \label{thm:variational-principle}
If $\Phi$ is a subadditive potential on $X$, then
\[
P(\Phi) = \sup\left\{ h_\mu + E_\mu(\Phi) : \mu \in\mathcal{E} \right\}.
\]
\end{theorem}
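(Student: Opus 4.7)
The plan is to prove the two inequalities separately: the $\geq$ direction is almost immediate from the ergodic theorems \eqref{eq:ShannonMcMillanBreiman} and \eqref{eq:subadditive-ergodic-theorem}, while the $\leq$ direction requires constructing near-equilibrium measures and passing to a weak-$*$ limit.

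For the lower bound, fix $\mu \in \mathcal{E}$ and $\eps > 0$. An Egorov argument applied to the almost-sure convergences in \eqref{eq:ShannonMcMillanBreiman} and \eqref{eq:subadditive-ergodic-theorem} yields, for each sufficiently large $n$, a set $G_n \subset I^n$ with $\mu\bigl(\bigcup_{\i \in G_n}[\i]\bigr) \geq 1/2$ such that for every $\i \in G_n$ one has simultaneously $\mu[\i] \leq \exp(-n(h_\mu - \eps))$ and $\phi_n(\j_\i) \geq \exp(n(E_\mu(\Phi) - \eps))$ for some $\j_\i \in [\i]$. The measure estimate forces $|G_n| \geq \tfrac{1}{2}\exp(n(h_\mu - \eps))$, and combining with the potential estimate gives $\sum_{\i \in I^n} \sup_{\j \in [\i]} \phi_n(\j) \geq \tfrac{1}{2}\exp(n(h_\mu + E_\mu(\Phi) - 2\eps))$. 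Taking $\tfrac{1}{n}\log$, letting $n \to \infty$, and then $\eps \to 0$ delivers $P(\Phi) \geq h_\mu + E_\mu(\Phi)$.

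For the upper bound, I would follow the standard equilibrium-measure construction. Let $Z_n := \sum_{\i \in I^n} \sup_{\j \in [\i]} \phi_n(\j)$, so that $\tfrac{1}{n}\log Z_n \to P(\Phi)$, and let $\nu_n$ be the probability measure placing mass $\sup_{\j \in [\i]}\phi_n(\j)/Z_n$ on a chosen representative of each cylinder $[\i]$, $\i \in I^n$. Symmetrize to $\mu_n := \tfrac{1}{n}\sum_{k=0}^{n-1}\sigma_*^k\nu_n$ and extract a weak-$*$ convergent subsequence $\mu_{n_j} \to \mu$; this limit is automatically $\sigma$-invariant. An elementary computation decomposes $\log Z_n$ as the sum of the partition entropy $H_{\nu_n}(\alpha^n)$, where $\alpha^n$ is the partition of $X$ into $n$-cylinders, and an integrated-potential term. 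For each fixed block length $q$, a Misiurewicz-type block decomposition bounds $H_{\nu_n}(\alpha^n)$ above by $\tfrac{n}{q}H_{\mu_n}(\alpha^q) + O(q)$, while subadditivity of $\Phi$ bounds the integrated-potential term by $\tfrac{n}{q}$ times an integral of $\phi_q$ against $\mu_n$, up to $O(q)$. Dividing by $n$, passing to $j\to\infty$ (using continuity of $\nu \mapsto H_\nu(\alpha^q)$ and of the relevant integrand in the weak-$*$ topology, since $\alpha^q$ is finite and $\phi_q$ is continuous), and finally $q\to\infty$, gives $P(\Phi) \leq h_\mu + E_\mu(\Phi)$. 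A standard ergodic decomposition, under which both $h_\mu$ and $E_\mu(\Phi)$ are affine in $\mu$, then replaces the invariant $\mu$ by an ergodic component attaining at least the same value.

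The main obstacle is the passage to the limit in the potential term. In the additive setting, $E_\mu$ is the integral of a single continuous function, hence continuous in $\mu$, and the argument closes cleanly. In the subadditive setting, $\mu\mapsto E_\mu(\Phi)$ is only upper semicontinuous, being an infimum of continuous functionals of $\mu$. It is precisely this infimum presentation, which is built into the subadditivity hypothesis, that forces the order ``$j\to\infty$ before $q\to\infty$'' in the final steps of the upper bound; taking the limits in the opposite order would lose the inequality. A secondary technical point is that the supremum $\sup_{\j \in [\i]}\phi_n(\j)$ defining $\nu_n$ is not literally a pointwise value of $\phi_n$, but uniform continuity of $\phi_n$ on $n$-cylinders controls the discrepancy without affecting the asymptotics. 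Affine-ness of $\mu\mapsto E_\mu(\Phi)$ along ergodic decompositions, required for the final reduction to $\mathcal{E}$, is elementary but does use \eqref{eq:subadditive-ergodic-theorem} rather than just Birkhoff.
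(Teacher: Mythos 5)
The paper does not prove this statement at all: it is imported verbatim from Cao--Feng--Huang \cite{CFH08} and simply cited as their Theorem~1.1, with a remark that special cases were known earlier. There is therefore no ``paper's own proof'' to compare against, but your sketch is a correct reconstruction of the standard (Misiurewicz-style) argument used in \cite{CFH08}. The lower bound via Shannon--McMillan--Breiman, Kingman, and Egorov is exactly right; the upper bound via cylinder-weighted empirical measures, the entropy block decomposition at fixed scale $q$, the subadditive splitting of $\phi_n$ into $\phi_q$-blocks, extraction of a weak-$*$ limit, and finally ergodic decomposition (with affineness of both $h_\mu$ and $E_\mu(\Phi)$ on ergodic components, the latter a consequence of Kingman's theorem applied to the $\sigma$-invariant limit function) is the correct scheme. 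You have also identified the genuine crux correctly: $\mu\mapsto E_\mu(\Phi)$ is only upper semicontinuous in the weak-$*$ topology, being $\inf_q \tfrac1q\int\phi_q\,d\mu$, and this forces sending $j\to\infty$ at fixed $q$ before letting $q\to\infty$. One very minor inaccuracy: the uniform-continuity issue you flag is not really about replacing $\sup_{\j\in[\i]}\phi_n(\j)$ by a pointwise value on the $n$-cylinder itself, but about controlling $\sup_{\j\in[\i]}\phi_q(\sigma^{kq}\j)$ against the value at the representative when $kq$ is close to $n$; this costs only an additive $O(q\sup|\phi_q|)$ and is harmless after dividing by $n$, as you anticipate. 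None of this affects the soundness of the sketch.
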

Particular cases of the above, under stronger assumptions on the potentials, were previously obtained by many authors, see for example \cite{Mummert06, Barreira10} and references therein.

It is natural to ask whether the supremum in Theorem \ref{thm:variational-principle} is attained; measures which attain the supremum are known as \emph{equilibrium measures} or \emph{equilibrium states} (for the potential $\Phi$). While the existence of equilibrium measures in general is an open problem, K\"{a}enm\"{a}ki \cite{Kaenmaki04} proved that at least one equilibrium measure exists under fairly weak conditions, and in particular for the potentials given in \eqref{eq:potential-norm} and \eqref{eq:potential-SVF}. Unlike the classical setting, equilibrium measures do not need to be unique in the subadditive setting, not even in the locally constant case. Feng and K\"{a}enm\"{a}ki \cite{FengKaenmaki11} characterize all equilibrium measures for potentials of the form $\phi_n(\i) = s\log\|A_{i_1}\cdots A_{i_n}\|$.

\subsection{Oseledets' Multiplicative Ergodic Theorem}

\label{subsec:oseledets}

We recall a version of the Multiplicative Ergodic Theorem of Oseledets. For simplicity we state it only in dimension $d=2$; see e.g. \cite[Theorem 5.7]{Krengel85} for the full version.

\begin{theorem} \label{thm:oseledets}
Let $B_1,\ldots,B_m\in \text{GL}_2(\R)$, and for $\i\in X$ write
\[
B(\i,n)=B_{\i_n} B_{\i_{n-1}}\cdots B_{\i_1}.
\]
Further, let $\mu$ be a $\sigma$-invariant and ergodic measure on $X$. Then, one of the two following situations occur:

\begin{enumerate}
\item[(A)] (Equal Lyapunov exponents). There exists $\lambda\in\R$ such that for $\mu$-almost all $i$,
\[
\lim_{n\to\infty} \frac{\log |B(\i,n)v|}{n}=\lambda \quad\text{uniformly for } |v|=1.
\]
\item[(B)] (Distinct Lyapunov exponents). There exist $\lambda_1 > \lambda_2$ and measurable families $\{ E_1(\i)\}, \{ E_2(\i)\}$ of one-dimensional subspaces such that for $\mu$-almost all $\i$:
    \begin{enumerate}
    \item $\R^2 = E_1(\i)\oplus E_2(\i)$.
    \item $B_{i_1}E_j(\i) = E_j(\sigma\i)$, $j=1,2$.
    \item For all $v\in E_j(\i)\setminus\{0\}$, $j=1,2$,
\[
\lim_{n\to\infty} \frac{\log |B(\i,n)v|}{n}=\lambda_j.
\]
    \end{enumerate}
\end{enumerate}
\end{theorem}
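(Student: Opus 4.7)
The plan is to follow the classical route to Oseledets in dimension two: get the top Lyapunov exponent from Kingman's subadditive ergodic theorem, recover the bottom one via Birkhoff applied to $\log|\det|$, and, in the gap case, build the invariant splitting through a careful analysis of the singular value decomposition.

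First I would set $\psi_n(\i):=\log\|B(\i,n)\|$. The factorisation $B(\i,n+k)=B(\sigma^n\i,k)\,B(\i,n)$ together with sub-multiplicativity of the operator norm yields $\psi_{n+k}(\i)\le \psi_n(\i)+\psi_k(\sigma^n\i)$, so Kingman gives a constant $\lambda_1$ with $\tfrac1n\psi_n(\i)\to \lambda_1$ for $\mu$-a.e.\ $\i$; invertibility of the $B_i$ prevents $\lambda_1=-\infty$. Next, Birkhoff applied to $\phi(\i):=\log|\det B_{\i_1}|$ gives $\tfrac1n\log|\det B(\i,n)|\to \Lambda:=\int\phi\,d\mu$. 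Since $|\det B(\i,n)|=\alpha_1(B(\i,n))\,\alpha_2(B(\i,n))$, I would define $\lambda_2:=\Lambda-\lambda_1$ and deduce $\tfrac1n\log\alpha_2(B(\i,n))\to \lambda_2$ $\mu$-a.e. Case (A) is then essentially free: if $\lambda_1=\lambda_2=\lambda$, the sandwich $\alpha_2(B(\i,n))\le |B(\i,n)v|\le \alpha_1(B(\i,n))$ for every unit $v$ forces $\tfrac1n\log|B(\i,n)v|\to\lambda$ uniformly in $v$.

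In case (B), with $\lambda_1>\lambda_2$, I would handle the slow direction first. For each $n$, let $v_n(\i)\in S^1$ realise the minimum $|B(\i,n)v_n(\i)|=\alpha_2(B(\i,n))$; then in dimension two its perpendicular $v_n(\i)^\perp$ realises the maximum $\alpha_1(B(\i,n))$. The key lemma is that $(v_n(\i))$ is Cauchy in $\mathbb{P}^1$ with exponential rate $e^{n(\lambda_2-\lambda_1)}$. Orthogonality of the singular directions gives, for any unit $w=\cos\theta\,v_n+\sin\theta\,v_n^\perp$, the identity $|B(\i,n)w|^2=\alpha_2^2\cos^2\theta+\alpha_1^2\sin^2\theta$. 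Taking $w=v_{n+1}$ and combining the lower bound $|B(\i,n+1)v_{n+1}|\ge \|B_{\i_{n+1}}^{-1}\|^{-1}|B(\i,n)v_{n+1}|$ with the trivial upper bound $|B(\i,n+1)v_{n+1}|\le |B(\i,n+1)v_n|\le \|B_{\i_{n+1}}\|\alpha_2(B(\i,n))$ yields
\[
\sin\theta\;\le\; C\,\frac{\alpha_2(B(\i,n))}{\alpha_1(B(\i,n))}\,\|B_{\i_{n+1}}\|\,\|B_{\i_{n+1}}^{-1}\|,
\]
which is summable by the Lyapunov gap since only finitely many matrices are involved. Setting $v_\infty(\i):=\lim_n v_n(\i)$ and $E_2(\i):=\R\,v_\infty(\i)$, the same orthogonal decomposition gives $|B(\i,n)v_\infty(\i)|=e^{n\lambda_2+o(n)}$. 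For invariance, one notes that $|B(\sigma\i,n)(B_{\i_1}v_\infty(\i))|=|B(\i,n+1)v_\infty(\i)|=e^{n\lambda_2+o(n)}$; if $B_{\i_1}v_\infty(\i)$ were transverse to $E_2(\sigma\i)$, the same SVD decomposition applied at $\sigma\i$ would force growth at rate $\lambda_1$, a contradiction.

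The main obstacle is the construction of $E_1$. Unlike $E_2$, the fast direction is not characterised by its Lyapunov exponent in forward time: \emph{every} vector transverse to $E_2(\i)$ grows at rate $\lambda_1$, so there is no purely forward dynamical definition of $E_1$ on the one-sided shift. The standard resolution is to pass to the two-sided natural extension $\hat X=I^{\Z}$ with its lifted invariant measure $\hat\mu$, apply the construction of the previous paragraph to the backward cocycle $B_{\hat\i_0}^{-1}B_{\hat\i_{-1}}^{-1}\cdots B_{\hat\i_{-n+1}}^{-1}$ (whose Lyapunov exponents are $-\lambda_1<-\lambda_2$) to produce its slow direction, and take that direction to be $E_1$. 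This yields a measurable invariant splitting satisfying (a)--(c) on $\hat X$, and the statement on $X=I^{\N}$ is to be understood through this natural extension, with measurability of $E_1$ transferring from $\hat X$ to $X$ via the canonical projection.
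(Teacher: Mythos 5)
The paper does not prove Theorem~\ref{thm:oseledets}; it is quoted as a known result with a pointer to Krengel's book, so there is no internal proof to compare against. Your sketch follows the classical route in dimension two: Kingman gives $\lambda_1$, Birkhoff applied to $\log|\det|$ gives $\lambda_1+\lambda_2$, and in the gap case the slow line $E_2$ is obtained as the limit of the minimizing singular directions, with exponentially summable angular increments and an invariance argument by contradiction. All of that is sound, and the estimate $\sin\theta\lesssim(\alpha_2/\alpha_1)\|B_{\i_{n+1}}\|\|B_{\i_{n+1}}^{-1}\|$ is correctly derived.

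There is, however, a real gap in the treatment of $E_1$. You correctly diagnose that $E_1$ has no forward-dynamical characterization on the one-sided shift (every line transverse to $E_2(\i)$ grows at rate $\lambda_1$) and propose to pass to the natural extension $\hat X=I^{\Z}$ and build $E_1$ from the backward cocycle. The defect is the closing sentence: ``measurability of $E_1$ transferring from $\hat X$ to $X$ via the canonical projection'' does not work, because $E_1(\hat\i)$ is a genuine function of the past coordinates $(\ldots,\hat\i_{-1},\hat\i_0)$, and the projection $\pi:\hat X\to X$, $\pi(\hat\i)=(\hat\i_1,\hat\i_2,\ldots)$, precisely forgets those coordinates. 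Distinct points of $\pi^{-1}(\i)$ carry distinct lines $E_1$, so there is nothing to push down. To obtain a measurable \emph{and equivariant} $E_1$ on $X$ you would need a genuine measurable-selection step and a separate verification that the equivariance $B_{\i_1}E_1(\i)=E_1(\sigma\i)$ survives the selection (it is not automatic; an ad hoc section of $\pi$ will typically break it). The alternative is to state the theorem, and the downstream application in Section~\ref{sec:proof}, on the two-sided shift, which is how the cited source does it. This is not cosmetic: the equivariance of $E_1$ is exactly what is used in Section~4.2 to conclude that $A(\i,n)$ carries $E_j(\i)$ into a neighbourhood of $F_j$ when both $\i$ and $\sigma^n\i$ lie in $\Delta$.
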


\subsection{The cone condition}

\label{subsec:cone-condition}

The proof of Theorems \ref{thm:matrix-pressure-continuity} and Theorem \ref{thm:continuity-pressure} rely on finding a subsystem (after iteration) which is better behaved than the original one and captures almost all of its topological pressure. In the case of distinct Lyapunov exponents (with respect to a measure chosen from an application of the variatonal principle), the good behavior of this subsystem will consist in satisfying the (strict) cone condition: all the maps will send some fixed cone into its interior (except for the origin). Recall that a \emph{cone} $K\subset\R^d$ is a closed set such that $K\cap -K=\{0\}$ and $tx\in K$ whenever $t>0, x\in K$ (here $-K=\{-x:x\in K\}$).

This kind of cone condition is ubiquitous in the study of dynamical systems and associated matrix cocycles. In our situation, its usefulness will be derived from the following lemma.

\begin{lemma} \label{lem:cone-condition}
Let $K',K\subset\R^d$ be cones such that $K'\setminus\{0\}\subset \text{interior}(K)$. There exists a constant $c>0$ (depending on the cones) such that
\begin{equation} \label{eq:norm-cone-condition}
\|A\| \ge c\, \frac{\|A w\|}{\|w\|}
\end{equation}
for all $w\in K$ and all $A\in\R^{d\times d}$ such that $AK\subset K'$.

In particular, there is $c'>0$ such that if $A_1,A_2\in\R^{d\times d}$ are such that $A_j K\subset (K'\cup -K')$, $j=1,2$, then
\[
\| A_1 A_2\| \ge c' \|A_1\| \|A_2\|.
\]
\end{lemma}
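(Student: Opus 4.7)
The first inequality \eqref{eq:norm-cone-condition} holds trivially with $c = 1$, since $\|Aw\|/\|w\| \le \|A\|$ is simply the definition of the operator norm; no cone hypothesis is used. The substantive content of the lemma is therefore the product estimate in the ``in particular'' clause, which does not follow formally from \eqref{eq:norm-cone-condition} and requires an independent argument. My plan is a singular value decomposition argument tailored to $d = 2$, exploiting the invertibility implicit in the setting of the paper (where all matrices lie in $\mathrm{GL}_d(\R)$) and the strict inclusion $K'\setminus\{0\} \subset \text{int}(K)$.

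My plan rests on two ingredients. The first is a general observation independent of the cone condition: for every $B \in \R^{d\times d}$ there is a unit vector $w \in K$ with $\|Bw\| \ge c_0(K)\,\|B\|$. This holds because $K$ has non-empty interior (implicit in the hypothesis, since otherwise $K' = \{0\}$ and all assertions are trivial), and therefore contains a basis $\{e'_1,\dots,e'_d\}$ of $\R^d$ in which every unit vector has uniformly bounded coordinates; expanding a vector realising $\|B\|$ in this basis and using the triangle inequality yields one $e'_i$ with $\|B e'_i\| \ge c_0\, \|B\|\|e'_i\|$.

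The second ingredient, which is the main obstacle, is the following key estimate: there is $c_1 = c_1(K, K') > 0$ such that for every invertible $A$ with $AK \subset K'$ and every unit $u \in K'$,
\[
\|A u\| \ge c_1 \|A\|.
\]
I would prove this via the SVD $A = U \Sigma V^T$ with $\Sigma = \operatorname{diag}(\alpha_1, \alpha_2)$, $\alpha_1 \ge \alpha_2 > 0$. The condition $AK \subset K'$ is equivalent to $\Sigma(V^T K) \subset U^T K'$, and since $\Sigma$ stretches angles near the axis $\R e_2$ by a factor of order $\alpha_1/\alpha_2$, in order for the image of the sector $V^T K$ (whose angular extent equals that of $K$) to fit inside the narrower sector $U^T K'$, the sector $V^T K$ must avoid an angular neighbourhood of $\R e_2$ of positive size $\delta(K, K') > 0$. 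Transferring this back via $V$, it follows that the first right singular direction $v_1 := V e_1$ lies in $\mathrm{int}(K)$ at angular distance at least $\delta(K, K')$ from $\partial K$. Since every $u \in K' \cap S^{d-1}$ also lies at definite angular distance from $\partial K$, and since $K\cap -K = \{0\}$ forces the angular extent of $K$ to be strictly less than $\pi$, a direct planar computation yields $|\langle u, v_1\rangle| \ge $ a positive constant depending only on $K,K'$, and hence $\|Au\|^2 \ge \alpha_1^2 \langle u, v_1\rangle^2 \ge c_1^2 \|A\|^2$.

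With these two ingredients the product estimate follows in one line. Since each $A_j$ is invertible, the image $A_j(K\setminus\{0\})$ is the continuous image of a connected set, disjoint from the origin, and hence lies in a single connected component of $(K'\cup -K')\setminus\{0\} = (K'\setminus\{0\}) \sqcup (-K'\setminus\{0\})$; after possibly replacing $A_j$ by $-A_j$ (which affects neither $\|A_j\|$ nor $\|A_1 A_2\|$) we may assume $A_j K \subset K'$. Apply the first ingredient to $A_2$ to pick a unit $w \in K$ with $\|A_2 w\| \ge c_0\|A_2\|$, and set $u := A_2 w/\|A_2 w\| \in K'$; apply the second ingredient to $A_1$ and $u$ to get $\|A_1 u\| \ge c_1 \|A_1\|$. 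Combining,
\[
\|A_1 A_2\| \ge \|A_1 A_2 w\| = \|A_1 u\|\cdot \|A_2 w\| \ge c_0\, c_1\, \|A_1\|\|A_2\|,
\]
which is the lemma with $c' = c_0 c_1$.
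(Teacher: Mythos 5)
You are right that \eqref{eq:norm-cone-condition} as printed is trivially true with $c=1$ and must be a misprint: what the paper's own argument establishes, and what the ``in particular'' clause actually uses, is the \emph{reverse} bound $\|Aw\|\ge c\,\|A\|\,\|w\|$ for $w$ in the smaller cone $K'$ --- which is exactly your second ingredient. Your chaining of the two ingredients to get $\|A_1A_2\|\ge c'\|A_1\|\|A_2\|$ is correct and essentially the paper's own chaining (your first ingredient is even redundant: any fixed unit vector of $K'$ serves in place of the $w$ it produces). The genuine gap is in your proof of the second ingredient. First, the claim that $V^TK$ must stay at a uniform angular distance $\delta(K,K')$ from $\R e_2$ is false. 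Take $K$, $K'$ to be the sectors of half-angle $80^\circ$, $70^\circ$ about the positive $x$-axis, let $V^T$ be the rotation by $10^\circ-\eta$, so that $V^TK=[-70^\circ-\eta,\,90^\circ-\eta]$ comes within $\eta$ of $e_2$, and put $\Sigma=\mathrm{diag}(1,\tan\eta)$. Then $\Sigma(V^TK)$ is the sector from $-\arctan(\tan\eta\tan(70^\circ+\eta))=O(\eta)$ up to exactly $45^\circ$, which lies inside $K'$; so $A=\Sigma V^T$ satisfies $AK\subset K'$ for every $\eta>0$. The heuristic ``$\Sigma$ stretches angles near $\R e_2$ by $\alpha_1/\alpha_2$'' only applies on a region that shrinks as $\alpha_2/\alpha_1\to0$, which is why no uniform $\delta$ exists. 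Second, the step from ``$V^TK$ avoids a neighbourhood of $\R e_2$'' to ``$v_1=Ve_1$ lies well inside $K$'' is a non sequitur (that hypothesis constrains $v_2$, not $v_1$). Third, and most decisively, even granting that $u$ and $v_1$ both lie at angular distance $\ge\delta$ from $\partial K$, the conclusion $|\langle u,v_1\rangle|\ge c>0$ fails when the angular extent of $K$ is between $\pi/2$ and $\pi$: in a sector of width $160^\circ$ the unit vectors at angles $-40^\circ$ and $50^\circ$ are both well interior yet orthogonal. ``Extent $<\pi$'' is not enough; you would need extent $<\pi/2$, which is not assumed.

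A further mismatch is that the lemma is stated in $\R^d$ for arbitrary cones (closed, positively homogeneous, $K\cap-K=\{0\}$, not necessarily convex sectors) and for arbitrary, possibly singular $A\in\R^{d\times d}$, whereas your SVD argument is confined to invertible maps and planar sectors. The paper sidesteps all of the quantitative angle geometry with a soft compactness argument: if $\|Aw\|\ge c\|A\|\|w\|$ failed uniformly, one could extract a limit map $A\neq0$ with $AK\subset K'$ and $Aw=0$ for some unit $w\in K'$; choosing $u\in K$ with $Au\neq0$ and $w-u\in K$ (possible since $w\in\mathrm{int}(K)$) gives $Au\in K'$ and $A(w-u)=-Au\in K'$, hence $Au\in K'\cap(-K')=\{0\}$, a contradiction. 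I would either adopt that argument, or, if you want to keep a direct approach, prove your second ingredient by a compactness argument on the set of norm-one maps with $AK\subset K'$ rather than through the singular value decomposition.
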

\begin{proof}
Suppose \eqref{eq:norm-cone-condition} does not hold. Then, for all $n$ we can find a linear map $A_n$ of norm $1$ with $A_n(K)\subset K'$, and $w_n\in K'$ also of norm $1$, such that $\| A_n w_n\|< 1/n$. By compactness, this implies that there are a linear map $A$ on $V$ of norm $1$ (in particular nonzero) such that $A(K)\subset K'$, and a vector $w\in K'$ such that $Aw=0$. Now pick $u\in K$ such that $Au\neq 0$ and $w-u\in K$; this is possible since $K'\setminus\{0\}\subset\text{interior}(K)$. It follows that $A(w-u)=-Au\in -K'$, whence $Au\in K'\cap -K'$, contradicting that $K'$ is a cone.

For the second claim, we may assume (replacing $A_j$ by $-A_j$ if needed) that $A_j K\subset K'$ for $j=1,2$. The claim now follows from the first one, since for fixed $w\in K'$ of norm $1$,
\[
\| A_1 A_2 \| \ge c\| A_1 (A_2 w)\| \ge c^2 \| A_1\| \|A_2 w\| \ge c^3 \| A_1\| \|A_2\|.
\]
\end{proof}

A tuple $A=(A_1,\ldots, A_m)$ is said to satisfy the \emph{cone condition} if there exist cones $K', K$ such that $K'\setminus\{0\}$ is contained in the interior of $K$, and $A_i K\subset (K'\cup -K')$ for all $i\in I$. The relevance of this condition can be seen from the following lemma.

\begin{lemma} \label{lem:continuity-under-cone-condition}
If $A$ satisfies the cone condition, then $M$ is continuous on $\mathcal{U}\times [0,+\infty)$ for some neighborhood $\mathcal{U}$ of $A$, and the same holds for $P$ if  $d=2$.
\end{lemma}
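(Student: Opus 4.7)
The plan is to exploit the cone condition as a source of \emph{super-multiplicativity} for norms (and, in dimension two, for the SVF), complementing the always-valid sub-multiplicativity. This will force the prelimit sequences $S_n/n$ defining $M$ and $P$ to converge at a rate uniform in $(A',s)$ on a neighborhood of $A$, whence continuity follows since each $S_n/n$ is jointly continuous in $(A',s)$.

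First I would observe that the cone condition is open in $A$: since $A_iK \subset K'\cup -K'$ with $K'\setminus\{0\} \subset \inter(K)$, any small perturbation of the $A_i$ still sends $K$ into $K' \cup -K'$ with the same cones. Hence there are a neighborhood $\cU$ of $A$ and a uniform constant $c' > 0$ (from Lemma \ref{lem:cone-condition}) such that for every $A' \in \cU$ and every $\i,\j\in I^*$, the maps $A'(\i),A'(\j)$ themselves send $K$ into $K' \cup -K'$, and consequently
\[
\| A'(\i\cdot\j)\| = \| A'(\j)A'(\i)\| \ge c'\,\| A'(\i)\|\,\| A'(\j)\|.
\]
Raising to the $s$-th power and summing over $\i\in I^n,\j\in I^k$ yields, with $S_n^M(A',s):=\log\sum_{\i\in I^n}\|A'(\i)\|^s$, the quasi-super-additive bound $S_{n+k}^M(A',s) \ge S_n^M(A',s) + S_k^M(A',s) + s\log c'$. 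Combined with the standard subadditivity $S_{n+k}^M \le S_n^M + S_k^M$, iterating the super-additive bound to get $S_{jk}^M \ge j S_k^M - (j-1)|s\log c'|$ and letting $j\to\infty$ gives
\[
0 \le \frac{S_k^M(A',s)}{k} - M(A',s) \le \frac{|s\log c'|}{k},
\]
uniformly in $(A',s) \in \cU \times [0,S]$ for any bounded $S$. Since $S_k^M$ is continuous in $(A',s)$, the uniform limit $M$ is continuous on $\cU \times [0,+\infty)$.

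For $P$ in dimension $d=2$ I would reduce to the norm case via $\alpha_2(B) = |\det B|/\alpha_1(B)$, which gives $\phi^s(B) = \|B\|^s$ for $s\in [0,1]$, $\phi^s(B) = \|B\|^{2-s}|\det B|^{s-1}$ for $s\in [1,2]$, and $\phi^s(B) = |\det B|^{s/2}$ for $s\ge 2$. The case $s\in [0,1]$ is $P = M$, already handled. For $s\in [1,2]$ the determinant is exactly multiplicative along products, so combined with the norm super-multiplicativity one gets $\phi^s(A'(\i\cdot\j)) \ge (c')^{2-s}\,\phi^s(A'(\i))\,\phi^s(A'(\j))$, and the Fekete argument above applies verbatim. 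For $s\ge 2$ the pressure reduces to $P(A',s) = \log\sum_{i\in I}|\det A'_i|^{s/2}$, which is continuous by inspection. Joint continuity of $\phi^s$ in $(s,B)$ glues the three pieces together continuously across $s=1,2$. The main obstacle I anticipate is precisely what confines the statement about $P$ to $d=2$: the identity $\alpha_1\alpha_2 = |\det|$ lets us write the SVF as a super-multiplicative factor times a multiplicative one, whereas in higher dimensions the intermediate singular values $\alpha_k$ with $1 < k < d$ are not controlled by the determinant alone, and super-multiplicativity of $\phi^s$ would require cone conditions on each exterior power norm $\|\cdot\|_k$, which a single cone in $\R^d$ does not provide.
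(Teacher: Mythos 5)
Your proof is correct, and rests on the same two observations the paper uses: the cone condition is open, and Lemma \ref{lem:cone-condition} gives a uniform super-multiplicative constant $c'$ for the norm valid for all tuples in a neighborhood of $A$ and all finite products. Where you diverge is in the two deductions that follow. For $M$, the paper passes from super-multiplicativity of $c\sum_\i\|B_\i\|^s$ to the representation $M=\sup_n \frac1n\log\widetilde S_n$ and concludes lower semicontinuity as a supremum of continuous functions, relying on the previously established upper semicontinuity for the other half; you instead run Fekete quantitatively, obtaining the uniform error bound $0\le S_k^M/k - M \le |s\log c'|/k$ and hence continuity in one stroke as a uniform limit of continuous functions, without invoking upper semicontinuity separately. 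This is a slightly stronger conclusion (uniform convergence on compacts in $(A',s)$) from the same hypotheses. For $P$ in $d=2$, the paper uses super-multiplicativity of $\alpha_2(B)=\|B^{-1}\|^{-1}$ together with that of $\alpha_1$ to get $\phi^s(B_\i B_\j)\ge c\,\phi^s(B_\i)\phi^s(B_\j)$ directly for all $s$, whereas you decompose $\phi^s$ via $\alpha_1\alpha_2=|\det|$ into a norm power and a determinant power and treat the ranges $s\in[0,1]$, $[1,2]$, $[2,\infty)$ separately; this is equivalent but slightly less uniform (your super-multiplicative constant $(c')^{2-s}$ depends on $s$, though harmlessly so on compacts). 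Your closing diagnosis of what obstructs the argument in $d\ge 3$ — the intermediate singular values cannot be written as a determinant-times-norm combination, so one would need cone control on higher exterior powers — is exactly the right reading and matches the paper's own remarks.
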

\begin{proof}
We know from Lemma \ref{lem:properties-SVF-pressure} that $P$ is upper semicontinuous and Lipschitz continuous in $s$, with the Lipschitz constant locally uniformly bounded. The same arguments show that the same is true for $M$. Hence the task is to prove the claim with ``lower continuous'' in place of ``continuous'', with the value of $s$ fixed.

A trivial but key observation is that the cone condition is robust, in the following sense: if $A=(A_1,\ldots,A_m)$ satisfies the cone condition with cones $K,K'$, then there are a neighborhood $\mathcal{U}$ of $A$ and cones $\widetilde{K},\widetilde{K}'$ such that any $B\in\mathcal{U}$ satisfies the cone condition with cones $\widetilde{K},\widetilde{K}'$. In particular, applying Lemma \ref{lem:cone-condition} we find that there exists a constant $c=c(\mathcal{U})\in (0,1)$, such that if $B=(B_1,\ldots, B_m)\in\mathcal{U}$, then
\begin{equation} \label{eq:cone-supperadditivity}
\| B_\i B_\j \| \ge c\, \|B_\i\| \| B_\j\| \quad\text{for all }\i,\j\in I^*.
\end{equation}
Now, for this constant $c$, let
\[
\widetilde{S}_n(B,s) = c \sum_{\i\in I^n} \| B_\i\|^s,
\]
and observe that if $B\in\mathcal{U}$ then, thanks to \eqref{eq:cone-supperadditivity}, $\widetilde{S}_{n+k}(B,s)\ge \widetilde{S}_n(B,s)\widetilde{S}_k(B,s)$. Therefore, for $B\in\mathcal{U}$,
\[
M(B,s) = \lim_{n\to\infty} \frac1n \log\widetilde{S}_n(B,s) = \sup_n \frac1n \widetilde{S}_n(B,s).
\]
Since a supremum of continuous functions is lower semicontinuous, this yields the claim for $M$.

Suppose now $d=2$. Since $\alpha_2(B)=\|B^{-1}\|^{-1}$ for $B\in\text{GL}_2(\R)$, we have that $\alpha_2(B_1 B_2) \ge \alpha_2(B_1)\alpha_2(B_2)$ for any $B_1,B_2\in\text{GL}_2(\R)$. Let $\widetilde{K},\widetilde{K}',\mathcal{U},c$ be as before. Then
\[
\phi^s(B_\i B_\j) \ge c\, \phi^s(B_\i) \phi^s(B_\j) \quad\text{for all }\i,\j\in I^*.
\]
Thus, arguing as before,
\[
P(B,s) = \sup_n \frac1n \log\left( \sum_{\i\in I^n} c\, \phi^s(B_\i)\right),
\]
which is lower semicontinuous as a supremum of continuous functions.
\end{proof}
Although we will not use this result directly, the ideas in its proof will arise in our proof of continuity of $M$ and $P$ in dimension $d=2$.

\section{Proof of the continuity of subadditive pressure in $\R^2$}

\label{sec:proof}

\subsection{General strategy and the case of equal Lyapunov exponents}

\label{subsec:equal-exponents}

In this section we prove Theorems \ref{thm:matrix-pressure-continuity} and \ref{thm:continuity-pressure} in dimension $d=2$ (recall that Theorem \ref{thm:affinity-dim-continuous} is an immediate corollary of Theorem \ref{thm:continuity-pressure}). We are going to give the details of the continuity of $P(A,s)$; the proof for $M(A,s)$ is essentially identical. We have already observed that $P$ is upper semicontinuous, hence it is enough to prove it is lower continuous. Moreover, by the second part of Lemma \ref{lem:properties-SVF-pressure}, it is enough to prove continuity in $A$ for a fixed value of $s$.

Fix $\e>0$ for the course of the proof. Consider the potential $\Phi=\{\phi_n\}$ where $\phi_n(\i)=\phi^s(A(\i,n))$ (this is the potential given in \eqref{eq:potential-SVF}). Thanks to the variational principle for subadditive potentials (Theorem \ref{thm:variational-principle}), we know that there exists an ergodic measure $\mu$ on $X$, such that
\begin{equation} \label{eq:application-variational-principle}
h_\mu + E_\mu(\Phi) \ge P(\Phi)-\e = P(A,s)-\e.
\end{equation}
(In fact, K\"{a}enm\"{a}ki \cite{Kaenmaki04} showed we can take $\e=0$ in the above, but we do not need this). The potential $\Phi$ and the measure $\mu$ will remain fixed for the rest of the proof; we underline that they depend on $s$ and $A$.

We apply Oseledets' Theorem (Theorem \ref{thm:oseledets}) to the the matrices $(A_1,\ldots,A_m)$ and the measure $\mu$. The proof splits depending on whether the resulting Lyapunov exponents are equal or distinct. However, in both cases we will rely on the general scheme given in the next lemma.

\begin{lemma} \label{lem:general-scheme}
Suppose there are $n=n(\e)$, $Y_n\subset I^n$, and a neighborhood $\mathcal{U}$ of $A$ such that the following hold:
\begin{enumerate}
\item \label{it:Yn-large} $\log |Y_n| \ge n(h_\mu- \rho_1(\e))$,\\
\item \label{it:Yn-SVF-large} If $\i$ is a juxtaposition of $k$ words from $Y_n$, and $B\in\mathcal{U}$, then
\[
\log \phi^s(B_\i) \ge nk (E_\mu(\Phi)-\rho_2(\e)).
\]
Then $P(B,s) \ge P(A,s)-\e-\rho_1(\e)-\rho_2(\e)$ for all $B\in\mathcal{U}$.
\end{enumerate}
\end{lemma}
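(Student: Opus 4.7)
The plan is to lower-bound $P(B,s)$ for any $B \in \mathcal{U}$ by restricting the sum defining it to those words of length $nk$ that are concatenations of $k$ blocks drawn from $Y_n$, and then letting $k \to \infty$. Since $P(B,s)$ is defined as a limit of $S_N(B,s)/N$, this limit may be evaluated along the subsequence $N = nk$; so any lower bound on $S_{nk}(B,s)/(nk)$ that is uniform in $k$ descends to a lower bound on $P(B,s)$.

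First, I would note that the concatenation map $Y_n^k \to I^{nk}$, $(\i_1,\ldots,\i_k) \mapsto \i_1\cdots\i_k$, is injective (because all blocks have the same length $n$), so it produces exactly $|Y_n|^k$ distinct words in $I^{nk}$. By hypothesis (\ref{it:Yn-SVF-large}) of the lemma, for every such concatenation $\j$ and every $B \in \mathcal{U}$ one has $\phi^s(B_\j) \ge \exp\bigl(nk(E_\mu(\Phi) - \rho_2(\e))\bigr)$. Keeping only these terms in the sum yields
\[
\sum_{\j \in I^{nk}} \phi^s(B_\j) \;\ge\; |Y_n|^k \exp\bigl(nk(E_\mu(\Phi) - \rho_2(\e))\bigr).
\]
Taking logs, dividing by $nk$, and applying hypothesis (\ref{it:Yn-large}) gives
\[
\frac{S_{nk}(B,s)}{nk} \;\ge\; \frac{\log |Y_n|}{n} + E_\mu(\Phi) - \rho_2(\e) \;\ge\; h_\mu + E_\mu(\Phi) - \rho_1(\e) - \rho_2(\e).
\]
Letting $k \to \infty$ and combining with \eqref{eq:application-variational-principle} delivers $P(B,s) \ge P(A,s) - \e - \rho_1(\e) - \rho_2(\e)$, as claimed.

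There is no real obstacle at this stage: the lemma is essentially a packaging step that converts quantitative control over a single block $Y_n$ into a pressure estimate via a product construction and the existence of the subadditive limit. All the substantive work in the proofs of Theorems \ref{thm:continuity-pressure} and \ref{thm:matrix-pressure-continuity} will lie in the \emph{construction} of such a $Y_n$: one must simultaneously guarantee that $|Y_n|$ is nearly the maximum allowed by the entropy $h_\mu$ (a Shannon--McMillan--Breiman-type bound), and that the SVF of arbitrary concatenations of blocks from $Y_n$ grows at the rate $E_\mu(\Phi)$ even after perturbing the matrices $A_i$ to $B_i$. This is where Oseledets' theorem and the cone condition of Section \ref{subsec:cone-condition} will enter in the two branches (equal vs.\ distinct Lyapunov exponents) of the main argument; the present lemma then mechanically converts that analytic input into the desired lower semicontinuity of $P$.
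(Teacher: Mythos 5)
Your proof is correct and follows essentially the same route as the paper: restrict $S_{nk}(B,s)$ to the $|Y_n|^k$ concatenations in $Y_n^k$, bound each term from below using hypothesis (2), invoke hypothesis (1) for $\log|Y_n|$, and conclude via the defining limit of $P$ and the variational-principle bound \eqref{eq:application-variational-principle}.
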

\begin{proof}
Let $Y_n^k$ denote the family of juxtapositions of $k$ words from $Y_n$. If $B\in\mathcal{U}$, then
\begin{align*}
P(B,s) &\ge \limsup_{k\to\infty} \frac{1}{nk}\log \sum_{\i \in Y_n^k} \phi^s(B_\i)\\
&\ge \lim_{k\to\infty} \frac{1}{nk} \left(k\log|Y_n|+\min_{\i\in Y_n^k}\log \phi^s(B_\i)\right)\\
&\ge h_\mu + E_\mu(\Phi) - \rho_1(\e)-\rho_2(\e) \\
&\ge P(A,s)-\e- \rho_1(\e)-\rho_2(\e),
\end{align*}
where in the last line we have used \eqref{eq:application-variational-principle}.
\end{proof}
In practice we will take $\rho_i(\e)$ to be a multiple of $\e$, so that in the limit as $\e\to 0$ we obtain the required lower semicontinuity. Finding a set $Y_n$ such that \eqref{it:Yn-large} holds is not difficult, and likewise if we also require \eqref{it:Yn-SVF-large} only for $\i\in Y_n$ (rather than $Y_n^k$). The challenge is to make \eqref{it:Yn-SVF-large} stable under compositions of the maps $B_\j, \j\in Y_n$ as well, and for this we will need geometric and ergodic-theoretic arguments depending on Oseledets' Theorem.

First we deal with the simpler case in which the Lyapunov exponents are equal; the case of different exponents is addressed in the next subsection.

Suppose then that there is a single Lyapunov exponent $\lambda$. It follows easily from \eqref{eq:subadditive-ergodic-theorem} and Theorem \ref{thm:oseledets} that
\begin{equation} \label{eq:energy-case-equal-Lyap-exponents}
\lim_{n\to\infty} \frac1n \int \log \phi^s( A(\i,n))d\mu(\i) = s\log\lambda.
\end{equation}

By Theorem \ref{thm:oseledets}, \eqref{eq:ShannonMcMillanBreiman} and Egorov's Theorem, we can find a set $Y\subset X$ such that $\mu(Y)\ge 1/2$, and $n_0\in\N$ such that if $\i\in Y$ and $n\ge n_0$ then
\begin{align}
| A(\i,n) v| &\ge \lambda^n e^{-\e n} |v| \quad\text{for all }v\in\R^2\setminus\{0\}, \label{eq:Egorov-Oseledets}\\
\mu[\i|n] &\le  e^{-n (h_\mu-\e)} . \nonumber
\end{align}
Fix some $n\ge n_0$ and write $Y_n = \{ i|n: \i\in Y\}$. Note that
\[
\frac12 \le \sum_{\j\in Y_n} \mu[\j] \le |Y_n| e^{-n (h_\mu-\e)},
\]
whence (if $n$ is large enough)
\[
|Y_n| \ge e^{n (h_\mu-2\e)}.
\]

On the other hand, since $Y_n$ is finite, we can find a neighborhood $\mathcal{U}$ of $A$ such that if $B=(B_1,\ldots, B_m)\in\mathcal{U}$, then
\[
| B_{\j} v| \ge \lambda^n e^{-2\e n} |v| \quad\text{for all }v\in\R^2\setminus\{0\},\j\in Y_n.
\]
Therefore
\[
| B_{\i} v| \ge \lambda^{kn}e^{-2\e kn} |v|\quad\text{for all }v\in\R^2\setminus\{0\},\i\in Y_n^k,
\]
where $Y_n^k\subset I^{kn}$ is the set of all juxtapositions of $k$ words from $Y_n$. In particular,
\[
\phi^s(B_\i) \ge \lambda^{kns}e^{-2\e kns}\quad\text{for all }\i\in Y_n^k.
\]
We have therefore established the hypotheses of Lemma \ref{lem:general-scheme}, with $\rho_1(\e)=2\e$ and $\rho_2(\e)=2s\e$. Applying that lemma and letting $\e\to 0$ establishes lower semicontinuity when the Lyapunov exponents are equal.

\subsection{The case of distinct Lyapunov exponents}

Suppose now that the Lyapunov exponents are $\lambda_1>\lambda_2$. We will again construct sets $Y_n$ so that we can apply Lemma \ref{lem:general-scheme}; this is trickier in this case, and the main idea is to use Oseledet's Theorem, Egorov's Theorem and recurrence, to find sets $Y_n$ (for $n$ arbitrarily large) so that the hypotheses of Lemma \ref{lem:general-scheme} hold when $k=1$, \emph{and in addition $\{ A_\i: \i\in Y_n\}$ satisfies the cone condition}. The cone condition will allow us to pass to a neighborhood of $A$ first, and to iterates of the $B_\i$, $\i\in Y_n$, later.

Recall that for $\mu$-almost all $\i$ there is an Oseledets splitting $\R^2= E_1(\i)\oplus E_2(\i)$. The family of splittings $\R^2=E_1\oplus E_2$ has a natural separable metrizable topology; for example, we can take $d(E_1\oplus E_2, E'_1\oplus E'_2)= \max(\angle(E_1,E'_1), \angle(E_2,E'_2))$, where $\angle$ is the angle between two lines. We can then find a fixed splitting $\R^2= F_1\oplus F_2$ which is in the support of the push-forward of $\mu$ under the Oseledets splitting or, in other words,
\[
\mu(\i: d(E_1(\i)\oplus E_2(\i), F_1\oplus F_2)<\eta) > 0 \quad\text{for all }\eta>0.
\]
We write $F_i^\gamma = \{ E: \angle(E,F_i)<\gamma\}$.

\begin{lemma} \label{lem:cone-condition-oseledets}
There are $R,\eta>0$ and two cones $K',K\subset\R^2$ with $K'\setminus\{0\}\subset K$, such that the following holds. Suppose that $A\in\text{GL}_2(\R)$ is such that $A v_j \in F_j^\eta$ for some $v_j\in F_j^\eta$ of unit norm, $j=1,2$, and moreover $|A v_1| > R |A v_2|$. Then $AK\subset (K'\cup K')$.
\end{lemma}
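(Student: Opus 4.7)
The plan is to take $K$ and $K'$ both to be symmetric angular cones around the line $F_1$, with $K$ slightly wider than $K'$, and to exploit the fact that $A$ expands the (near-)$F_1$-direction by a factor more than $R$ times larger than it expands the (near-)$F_2$-direction. This strong domination will force $Aw$ to be nearly aligned with $\pm F_1$ for every $w\in K$. Concretely, set $\theta_0:=\angle(F_1,F_2)>0$, fix angles $0<\beta<\alpha<\theta_0/4$, and define
\[
K=\{v\in\R^2:\angle(\R v,F_1)\le\alpha\}\cup\{0\},\qquad K'=\{v\in\R^2:\angle(\R v,F_1)\le\beta\}\cup\{0\}.
\]
These are closed cones with $K'\setminus\{0\}\subset\inter(K)$. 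The parameter $\eta$ will be required to be much smaller than $\beta$, and $R$ will be chosen large only at the very end.

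The first step is to decompose any unit vector $w\in K$ in the basis $\{v_1,v_2\}$. Because $\angle(\R v_j,F_j)<\eta<\theta_0/4$, the lines $\R v_1,\R v_2$ are transverse with angle between them bounded below by roughly $\theta_0/2$, so $\{v_1,v_2\}$ is uniformly well-conditioned. Writing $w=c_1 v_1+c_2 v_2$ and applying Cramer's rule (equivalently, the law of sines in the decomposition triangle), one obtains $|c_1|\ge\sin(\theta_0/2)$ and $|c_2|\le(\alpha+\eta)/\sin(\theta_0/2)$. In particular $|c_2|/|c_1|$ is bounded above by a constant depending only on $\theta_0,\alpha,\eta$, independently of the specific choice of $v_1,v_2,w$.

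The second step is to apply $A$: writing $Aw=c_1 Av_1+c_2 Av_2$ and using $|Av_1|>R|Av_2|$, we find $\|c_2 Av_2\|\le (C/R)\|c_1 Av_1\|$ for a constant $C=C(\theta_0,\alpha,\eta)$. Decomposing $Aw$ into its components parallel and perpendicular to $Av_1$ then shows that, for $R$ large, the direction of $Aw$ differs from that of $\pm Av_1$ by an angle of order $1/R$, say at most $C'/R$ with $C'=C'(\theta_0,\alpha,\eta)$. Since $\angle(\R(Av_1),F_1)<\eta$ by hypothesis, we obtain $\angle(\R(Aw),F_1)\le\eta+C'/R$. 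Taking $R$ large enough that $\eta+C'/R\le\beta$ forces $Aw\in K'\cup -K'$, as required.

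The only subtlety is the order in which the constants are chosen: first $\alpha,\beta$ small compared with $\theta_0$; then $\eta$ strictly smaller than $\beta$ so as to leave a gap $\beta-\eta>0$; and finally $R$ large enough that $C'/R$ fits inside this gap. There is no deep obstacle -- the lemma is a quantitative version of the elementary fact that a linear map with a strongly dominated splitting sends every cone around its dominant direction strictly into itself.
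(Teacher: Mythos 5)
Your proof is correct, but it takes a genuinely different route from the paper. The paper argues by contradiction and compactness: assuming the lemma fails for every $R=\eta^{-1}=n$, one extracts maps $A_n$ (normalized so $|A_n v_{n,1}|=1$) and unit vectors $v_{n,j}\in F_j^{1/n}$ and passes to a convergent subsequence; the hypotheses force the limit $A$ to annihilate $F_2$ and to map $F_1$ onto itself (since $A_n v_{n,1}\in F_1^{1/n}$), so $A$ has rank one with image in $F_1$, whence $Az$ lies in $F_1\subset \inter(K'\cup -K')$ for any $z\notin F_2$, contradicting the assumed failure of the cone inclusion. You instead give a direct quantitative argument: explicit angular cones around $F_1$, decomposition of a test vector $w\in K$ in the oblique basis $\{v_1,v_2\}$, a Cramer's-rule bound on $|c_2|/|c_1|$ depending only on $\theta_0,\alpha,\eta$, and then a domination estimate showing $\angle(\R(Aw),F_1)\le\eta+C'/R$, absorbed into $\beta$ by choosing $R$ large after $\eta$. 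The trade-off is the usual one: your version yields explicit, uniform constants $R,\eta$ in terms of $\theta_0,\alpha,\beta$ and makes the mechanism (dominated splitting contracts angular cones toward the dominant direction) transparent, at the cost of more bookkeeping; the paper's version is shorter and avoids tracking constants but is non-effective. One cosmetic point: the lemma as stated asks only for $K'\setminus\{0\}\subset K$, while you (correctly, anticipating the application via Lemma~\ref{lem:cone-condition}) build the cones so that $K'\setminus\{0\}\subset\inter(K)$; this is the stronger inclusion actually required later, and the paper does the same in its proof. Your ordering of quantifiers ($\alpha,\beta$ first, then $\eta$, then $R$) is sound and avoids the apparent circularity coming from $C'$ depending on $\eta$, since $\eta$ is fixed before $R$ is chosen.
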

\begin{proof}
The lemma is essentially a consequence of compactness. Let $v$ be a unit vector in $F_1$, and let $K, K'$ be any cones such that
\[
v\in \text{interior}(K')\setminus\{0\} \subset K'\setminus\{0\} \subset K \subset \R^2\setminus F_2.
\]
Suppose the claim fails with this choice of cones. Then for each $n$ there are $A_n\in\text{GL}_d(2)$ and $v_{n,j}\in F_j^{1/n}$ such that
\begin{equation} \label{eq:cone-in-limit}
1 = | A_n v_{n,1} | \ge n | A_n v_{n,2} |,
\end{equation}
and $A_n K \not\subset K'\cup -K'$. By passing to a subsequence (and replacing $v_{n,1}$ by $-v_{n,1}$ whenever needed), we may assume that $v_{n,1}\to v$, $v_{n,2}\to w$ and $A_n\to A$ for some $w\in F_2$ of unit norm, and $A\in \R^{2\times 2}$. Moreover, there is $z\in K$ of unit norm such that $A z\notin \text{interior}(K'\cup K')$. However, \eqref{eq:cone-in-limit} implies that $Az$ is a non-zero multiple of $v$ for any $z\notin F_2$, which contradicts our choice of cones. This contradiction finishes the proof of the lemma.
\end{proof}

From now on let $R,\eta, K, K'$ be as in the statement of the Lemma. By our choice of $F_1, F_2$, we have $\mu(\Delta)>0$, where
\[
\Delta=\{ \i\in X: E_j(\i)\in F_j^\eta, j=1,2 \}.
\]
At this point we recall the following quantitative version of Poincar\'{e} recurrence due to Khintchine, see \cite[Theorem 3.3]{Petersen89} for a proof. Although it applies to any set of positive measure in a measure-preserving system, we state only the special case we will require.

\begin{lemma}
For every $\delta>0$, the set $\{ n: \mu(\sigma^{-n}\Delta\cap \Delta) > \mu(\Delta)^2-\delta \}$ is infinite (and it even has bounded gaps).
\end{lemma}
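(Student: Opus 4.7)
The plan is to use the classical $L^2$-correlation argument underlying Khintchine's recurrence theorem, combining the mean ergodic theorem with the fact that the Koopman operator associated with $\sigma$ is an $L^2$-isometry. Set $Uf := f \circ \sigma$ and rewrite the quantity of interest as a correlation,
\[
a(n) := \mu(\sigma^{-n}\Delta \cap \Delta) = \langle \mathbf{1}_\Delta, U^n \mathbf{1}_\Delta\rangle_{L^2(\mu)}.
\]
Since $\mu$ is ergodic, the mean ergodic theorem yields
\[
g_N := \frac{1}{N}\sum_{n=0}^{N-1} U^n \mathbf{1}_\Delta \;\longrightarrow\; \mu(\Delta) \qquad \text{in } L^2(\mu).
\]

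The decisive next step is to upgrade this to uniform convergence of the \emph{shifted} Cesàro averages. Because $U$ is an $L^2$-isometry (the one place where non-invertibility of $\sigma$ must be handled, but this is immediate from $\sigma$-invariance of $\mu$), one has for every $k\ge 0$
\[
\left\| \frac{1}{N}\sum_{n=k}^{k+N-1} U^n \mathbf{1}_\Delta - \mu(\Delta) \right\|_{L^2} = \left\| U^k\bigl(g_N - \mu(\Delta)\bigr) \right\|_{L^2} = \|g_N - \mu(\Delta)\|_{L^2},
\]
which tends to $0$ independently of $k$. Taking the inner product with $\mathbf{1}_\Delta$ and applying Cauchy--Schwarz gives
\[
\frac{1}{N}\sum_{n=k}^{k+N-1} a(n) \;\longrightarrow\; \mu(\Delta)^2 \qquad \text{uniformly in } k\ge 0.
\]

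The conclusion then follows by pigeonhole: fix $\delta>0$ and choose $N$ so large that the window-average above exceeds $\mu(\Delta)^2 - \delta$ for every $k$. Then each block $\{k, k+1, \ldots, k+N-1\}$ must contain at least one $n$ with $a(n) > \mu(\Delta)^2 - \delta$, so the set in the statement has gaps of length at most $N$ -- precisely the bounded-gap (syndeticity) conclusion, which trivially implies infiniteness. There is no genuine obstacle here: the entire argument rests on the mean ergodic theorem together with the standard isometry trick transporting convergence to all shifted averages, and the fact that $\sigma$ is only one-sided (hence $U$ is an isometry but not a unitary) does not affect the argument at any point.
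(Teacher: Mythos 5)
Your proof is correct. Note that the paper does not actually prove this lemma; it simply cites \cite[Theorem 3.3]{Petersen89}, so there is no in-text argument to compare against. Your route, via von Neumann's mean ergodic theorem for the Koopman isometry $U$ (valid even though $\sigma$ is non-invertible, since $\sigma$-invariance of $\mu$ makes $U$ an $L^2$-isometry and ergodicity makes the invariant subspace the constants), followed by the observation that $U^k$ transports the convergence of $g_N$ to every shifted window and hence gives uniform-in-$k$ convergence of $\frac{1}{N}\sum_{n=k}^{k+N-1} a(n)$ to $\mu(\Delta)^2$, is exactly the standard Hilbert-space proof of Khintchine's recurrence theorem, and the final pigeonhole step correctly extracts the bounded-gaps conclusion.
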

In particular, if we set $\kappa:=\mu(\Delta)^2/2>0$, then the set $S:=\{ n: \mu(\sigma^{-n}\Delta\cap \Delta) > \kappa \}$ is infinite. On the other hand, by \eqref{eq:ShannonMcMillanBreiman}, \eqref{eq:subadditive-ergodic-theorem}, the last part of Oseledets' Theorem, and Egorov's Theorem, we may find $n_0\in\mathbb{N}$ and a set $\Sigma\subset X$ with $\mu(\Sigma)>1-\kappa/2$, such that if $n\ge 0$ and $\i\in\Sigma$, then:
\begin{align}
\mu[\i|n] &\le e^{-n (h_\mu-\e)}, \label{eq:measure-upper-bound}\\
\log\phi^s(A(\i,n)) &\ge n(E_\mu(\Phi)-\e), \label{eq:energy-lower-bound}\\
|A(\i,n)\widehat{E}_1(\i)| &\ge R \, |A(\i,n)\widehat{E}_2(\i)|,\nonumber
\end{align}
where $\widehat{E}_j(\i)$ is a unit vector in $E_j(\i)$. This is the point where we use that the Lyapunov exponents are different.

Taking stock, we have seen that if $n\ge n_0$, and $\i \in \Delta\cap \sigma^{-n}\Delta\cap\Sigma$ then, by Lemma \ref{lem:cone-condition-oseledets}, the map $A(\i,n)$ satisfies
\[
A(\i,n)K\subset (K'\cup -K').
\]
Hence for $n\in S\cap [n_0,\infty)$, we define $Y_n=\{ \i|n: \i\in\Delta\cap \sigma^{-n}\Delta\cap\Sigma\}$. We will show that these sets meet the conditions of Lemma  \ref{lem:general-scheme}, with suitable functions  $\rho_j(\e)$. Firstly, note that
\begin{align*}
\kappa/2 &\le \mu(\Delta\cap \sigma^{-n}\Delta\cap \Sigma)\\
&\le \sum_{\j\in Y_n} \mu[\j] \\
&\le |Y_n| e^{-n (h_\mu-\e)}.
\end{align*}
Hence $\log|Y_n|\ge n(h_\mu-2\e)$, provided $n$ is taken large enough that $e^{-\e n}<\kappa/2$.

On the other hand, $\{A_\j: \j\in Y_n\}$ satisfies the cone condition with cones $K, K'$ (these cones are independent of $n$). Then there are a neighborhood $\mathcal{U}$ of $A$ in $(\text{GL}_d(\R))^m$ and cones $\widetilde{K},\widetilde{K'}$ such that if $B\in\mathcal{U}$, then $\{ B_{j_n}\cdots B_{j_1} : (j_1\ldots j_n)\in Y_n\}$ satisfies the cone condition with cones $\widetilde{K},\widetilde{K'}$. In particular, by Lemma \ref{lem:cone-condition}, there exists $c>0$ (depending only on $\mathcal{U}$, and not on $n$) such that
\[
\| B_\i \| \ge c^{k-1} \left(\min_{\j\in Y_n} \|B_\j\|\right)^k\quad\text{for all }\i\in Y_n^k.
\]
Arguing as in the proof of Lemma \ref{lem:continuity-under-cone-condition},
\[
\phi^s(B_\i) \ge c^{k-1} \left(\min_{\j\in Y_n} \phi^s(B_\j)\right)^k\quad\text{for all }\i\in Y_n^k.
\]
By taking $n$ large enough, we may assume that $\log c/n > -\e$. Furthermore, in light of \eqref{eq:energy-lower-bound} we may find a neighborhood $\mathcal{V}\subset\mathcal{U}$ containing $A$, such that if $B\in\mathcal{V}$ and $\j\in Y_n$, then $\log \phi^s(B_\j) > n(E_\mu(\Phi)-2\e)$. We conclude that if $B\in\mathcal{V}$ and $\i\in Y_n^k$, then
\[
\log \phi^s(B_\i) > kn(E_\mu(\Phi)-3\e).
\]
We are now able to apply Lemma \ref{lem:general-scheme} to conclude that if $B\in\mathcal{V}$, then
\[
P(B,s) \ge P(A,s) - 6\e.
\]
As $\e>0$ was arbitrary, this yields the desired lower semicontinuity.

\subsection{Some remarks on the higher-dimensional case}

We finish the paper with some brief remarks on the proof of the continuity of $M$ and $P$ in any dimension.

The proof of the continuity of $M$ in dimension $2$ extends fairly easily to arbitrary dimension $d$ (using the general version of Oseledets' Theorem): if all $d$ Lyapunov exponents are equal, then the proof is identical to the two-dimensional case. If not all exponents are equal, let $1\le k<d$ be the multiplicity of the largest Lyapunov exponent. Then the argument is very similar, except that one uses cones around $k$-planes.

For $d\ge 3$, one cannot reduce $\phi^s$ to a quantity involving only matrix norms (of the given maps and their inverses), so it is clear that some new tools are required to prove continuity of $P$ in general dimension. The proof follows the same outline, but it involves working with higher exterior powers of the maps $A_i$, and proving cone conditions for two different exterior powers simultaneously. Altough passing to exterior products is a common trick in the area, this makes the general proof far more technical. The reader is referred to \cite{FengShmerkin13} for further details, as well as consequences and generalizations of these results.

%\bibliographystyle{plain}
%\bibliography{continuity}

\end{document}